\newtheorem{theorem}{Theorem}
\newtheorem{lemma}{Lemma}
\newtheorem{prop}{Proposition}
\newtheorem{cor}{Corollary}
\newtheorem{remark}{Remark}
\newtheorem{example}{Example}
\theoremstyle{definition}
\newtheorem{definition}[theorem]{Definition}
\def\re{\mathbb{R}}
\def\N{\mathbb{N}}
\def\({\left(}
\def\){\right)}
\def\[{\left[}
\def\]{\right]}
\def\pd{\partial}
\def\lap{\Delta}
\def\dis{\displaystyle}
\def\ep{\varepsilon}
\def\w{\omega}
\def\la{\lambda}
\newtheorem{ThmA}{Theorem A}
\begin{document}

\begin{frontmatter}



\title{Weighted Trudinger-Moser inequalities \\in the subcritical Sobolev spaces and their applications 
}


\author[MI]{Masahiro Ikeda}
\ead{masahiro.ikeda@riken.jp, masahiro.ikeda@keio.jp}

\author[MS]{Megumi Sano\corref{Sano}\fnref{label1}}
\ead{smegumi@hiroshima-u.ac.jp}
\fntext[label1]{Corresponding author.}

\author[KT]{Koichi Taniguchi}
\ead{taniguchi.koichi@shizuoka.ac.jp}

\address[MI]{Faculty of Science and Technology, Keio University, 3-14-1 Hiyoshi, Kohoku-ku, Yokohama 223-8522, Japan / Center for Advanced Intelligence Project RIKEN, Japan}
\address[MS]{Laboratory of Mathematics, School of Engineering,
Hiroshima University, Higashi-Hiroshima, 739-8527, Japan}
\address[KT]{Department of Mathematical and Systems Engineering, 
Faculty of Engineering, 
Shizuoka University, 3-5-1 Johoku, Chuo-ku, Hamamatsu, Shizuoka, 432-8561, Japan}

\begin{keyword}
Weighted Trudinger-Moser inequality \sep Variational method \sep Elliptic equations

\MSC[2020]  26D10 \sep 35J20 \sep 46E35 
\end{keyword}

\date{\today}

\begin{abstract}
We study boundedness, optimality and attainability of Trudinger-Moser type maximization problems in the radial and the subcritical homogeneous Sobolev spaces $\dot{W}^{1,p}_{0, \text{rad}}(B_R^N)\,(p<N)$. 
Our results give a revision of an error in \cite[Theorem C]{HL}. 
Also, our inequality converges to the original Trudinger-Moser inequality as $p \nearrow N$ including optimal exponent and concentration limit. 
Finally, we consider an application of our inequality to elliptic problems with exponential nonlinearity.

\end{abstract}

\end{frontmatter}



%
%
\section{Introduction}\label{S Intro}

Let $1< p \le N, p' := \frac{p}{p-1}$ and $B_R^N$ be open ball in $\re^N$ with center $0$, with radius $R \in (0, \infty)$. For unified notation, we set $B_{\infty}^N= \re^N$. The boundedness, the optimality and the (non-)existence of a maximizer of the following type maximization problems $T_p, T_p^{\rm rad}$ have been studied so far.
\begin{align*}
T_p &:= \sup \left\{  \int_{B_R^N} f(u) V(|x|) \,dx \,\,\middle| \,\,  u \in \dot{W}_0^{1,p} (B_R^N), \,\,\| \nabla u\|_p \le 1  \right\}\\
&\ge \sup \left\{  \int_{B_R^N} f(u) V(|x|) \,dx \,\,\middle| \,\,  u \in \dot{W}_{0, {\rm rad}}^{1,p} (B_R^N), \,\,\| \nabla u\|_p \le 1  \right\} =: T_{p}^{\rm rad}
\end{align*}

\noindent
At first, we give the following figures to explain known results about the boundedness of $T_p, T_p^{\rm rad}$ and our motivations of the present paper.

\begin{table}[h]
\caption{Boundedness of $T_p \,(p<N, R \in (0, \infty])$}
\label{subcritical table non-rad}
\centering
\begin{tabular}{|r|r|r|r|}
\hline
$f(u)$ & $V(|x|)$ & Name of ineq. & Supplement \\  \hline 
$|u|^p$ & $|x|^{-p}$ & Hardy &   \\ \hline
$|u|^q\,(p< q < p^*)$ & $|x|^{-A_q}$ & Hardy-Sobolev  &  $A_q := \frac{p^* -q}{p^* -p}p >0$ \\ \hline
$|u|^{p^*}$ & $1$ & Sobolev &  $p^*:= \frac{Np}{N-p}$ \hspace{2em} {}\\ \hline
$|u|^q\,(p^*< q < \infty)$ & 0 &  &   \\ \hline
\end{tabular}
\end{table}

\begin{table}[h]
\caption{Boundedness of $T_p^{\rm rad} \,(p<N, R \in (0, \infty])$}
\label{subcritical table}
\centering
\begin{tabular}{|r|r|r|r|}
\hline
$f(u)$ & $V(|x|)$ & Name of ineq. & Supplement \\  \hline 
$|u|^p$ & $|x|^{-p}$ & Hardy &   \\ \hline
$|u|^q\,(p< q < p^*)$ & $|x|^{-A_q}$ & Hardy-Sobolev  &  $A_q := \frac{p^* -q}{p^* -p}p >0$ \\ \hline
$|u|^{p^*}$ & $1$ & Sobolev &  $p^*:= \frac{Np}{N-p}$ \hspace{2em} {}\\ \hline
$|u|^q\,(p^*< q < \infty)$ & $|x|^{\,B_q}$ & (H\'enon or Ni) &  $B_q := \frac{q-p^*}{p^* -p}p >0$ \\ \hline
\end{tabular}
\end{table}

\begin{table}[h]
\caption{Boundedness of $T_N$ and $T_N^{\rm rad} \,(R \in (0, \infty))$}
\label{critical table}
\centering
\begin{tabular}{|r|r|r|r|}
\hline
$f(u)$ & $V(|x|)$ & Name of ineq.  & Supplement \\  \hline 
$|u|^N$ & $|x|^{-N} \( \log \frac{aR}{|x|} \)^{-N}$ & Critical Hardy  & $a \ge 1$  \\ \hline
$|u|^q\,(N< q < \infty)$ & $|x|^{-N} \( \log \frac{aR}{|x|} \)^{-\beta_q}$ & (Generalized C.H.) &  $\beta_q := \frac{N-1}{N}q +1$ \\ \hline
$\exp ( \gamma_\beta |u|^{N'} )$ & $|x|^{-\beta}\,(\beta \in (0, N))$ & Singular T.-M. & $\gamma_\beta := \alpha_N (1-\beta/N)$ \\ \hline
$\exp ( \alpha_N |u|^{N'} )$ & $1$ & Trudinger-Moser & $\alpha_N := N \w_N^{\frac{1}{N-1}}$ \\ \hline
\end{tabular}
\end{table}

\noindent
In the subcritical case $p<N$, we point out that for any $q \in (p^*, \infty)$
\begin{align*}
T_p^{\rm rad} &= \sup \left\{  \int_{B_R^N} |u|^q |x|^{B_q}\,dx \,\,\middle| \,\,  u \in \dot{W}_{0, {\rm rad}}^{1,p} (B_R^N), \,\,\| \nabla u\|_p \le 1  \right\}<\infty,\\
T_p &=\sup \left\{  \int_{B_R^N} |u|^q |x|^{B_q}\,dx \,\,\middle| \,\,  u \in \dot{W}_{0}^{1,p} (B_R^N), \,\,\| \nabla u\|_p \le 1  \right\}   = \infty,\\
T_p^{\rm rad} &= \sup \left\{  \int_{B_R^N} |u|^q \,dx \,\,\middle| \,\,  u \in \dot{W}_{0, {\rm rad}}^{1,p} (B_R^N), \,\,\| \nabla u\|_p \le 1  \right\}=\infty.
\end{align*}
Therefore, we see that the stronger growth $f(u)=|u|^q \,(q >p^*)$ than $|u|^{p^*}$ is admitted thanks to the vanishing weight function $V(|x|)=|x|^{B_q}\,(B_q >0)$ and the restriction of $\dot{W}_0^{1,p} (B_R^N)$ to $\dot{W}_{0, {\rm rad}}^{1,p} (B_R^N)$. Based on this fact, Ni \cite{N} showed the existence of a radial weak solution to the H\'enon equation:  
\begin{align*}
-\lap_p u = |x|^B |u|^{q-2} u  \,\,\text{in}\,\, B_R^N, \quad u|_{\pd B_R^N} =0\quad(B >0)
\end{align*}
for the stronger nonlinearity $|u|^{q-2}u \,(q >p^*)$. 
Inspired by Ni's result, we consider the next growth, that is, the exponential growth $f(u) = \exp (\alpha |u|^{p'})$ beyond polynomial growth $|u|^q$ in the subcritical case $p<N$. This is an analogue of the critical case $p=N$, see Table \ref{critical table}.

We can easily observe that the boundedness of $T_p^{\rm rad}$ is determined by trade-off between the growth of $f(u)$ at $+\infty$ and the vanishing speed (or singularity) of $V(|x|)$ at $0$. Namely, if we choose stronger $f(u)$, then we have to choose more rapidly vanishing (or weaker) $V(|x|)$ to obtain the boundedness of $T_p^{\rm rad}$. Based on this viewpoint, we introduce more rapidly vanishing weight function $V_p(|x|)$ than $|x|^{B_q}$ as follows. 

\begin{definition}\label{Def V_p}
Let $1< p< N$ and $R \in (0, \infty]$. Then for $x \in B_R^N \setminus \{ 0\}$ we define
\begin{align*}
V_p(|x|) &:= \( \frac{\w_p}{\w_N} \)^{p'} |x|^{-(N-1) p'} \exp \left[ -\frac{p-1}{N-p} p \( \frac{\w_p}{\w_N} \)^{\frac{1}{p-1}} \( |x|^{-\frac{N-p}{p-1}} -R^{-\frac{N-p}{p-1}} \) \right],
\end{align*}
where $\dis{\w_p = \frac{2\pi^{\frac{p}{2}}}{\Gamma \( \frac{p}{2} \)}}$ and $(\infty)^{-\frac{N-p}{p-1}} := 0$. Furthermore, we define 
\begin{align*}
F_{p, \alpha} (u) &:= \int_{B_R^N} \exp (\alpha |u|^{p'}) V_p (|x|) \,dx\quad (u \in \dot{W}_{0, {\rm rad}}^{1,p} (B_R^N)),\\
T^{\rm rad}_{p, \alpha} &:= \sup \left\{  F_{p, \alpha}(u) \,\,\middle| \,\,  u \in \dot{W}_{0, {\rm rad}}^{1,p} (B_R^N), \,\,\| \nabla u\|_p \le 1 \right\}.
\end{align*}
\end{definition}

For the weight function $V_p(|x|)$, we obtain the optimality of the growth of $F_{p, \alpha} (u)$ and the boundedness of $T_{p, \alpha}^{\rm rad}$ as follows.

\begin{theorem}\label{Thm WTM bdd}
Let $1<p< N$ and $R \in (0, \infty]$. Then
\begin{align*}
&{\rm (I)} \,\int_{B_R^N} \exp ( \alpha |u|^{\gamma}) V_p (|x|) \,dx < \infty\,\,\text{for any}\,u \in \dot{W}_{0, {\rm rad}}^{1,p} (B_R^N)\,\,\text{and}\,\,\alpha >0 \iff \gamma \le p',\\
&{\rm (II)} \,\, T^{\rm rad}_{p, \alpha} < \infty \iff \alpha \le \alpha_p:= p \w_p^{\frac{1}{p-1}}.
\end{align*}
\end{theorem}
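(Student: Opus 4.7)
The plan is to reduce Theorem \ref{Thm WTM bdd} to the classical Trudinger--Moser inequality on the $p$-dimensional unit ball $B_1^p$ via an explicit radial change of variables tailored to the weight $V_p$. For radial $u \in \dot{W}^{1,p}_{0,\mathrm{rad}}(B_R^N)$ I would introduce the new radial variable
\begin{equation*}
\rho(r) := \exp\!\left[-\frac{p-1}{N-p}\(\frac{\w_p}{\w_N}\)^{1/(p-1)}\(r^{-(N-p)/(p-1)} - R^{-(N-p)/(p-1)}\)\right],
\end{equation*}
which (using the stated convention $(\infty)^{-(N-p)/(p-1)}=0$) is a smooth strictly increasing bijection $(0,R) \to (0,1)$, and set $v(\rho) := u(r(\rho))$. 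Direct differentiation yields the key identity $\rho'(r) = (\w_p/\w_N)^{1/(p-1)} r^{-(N-1)/(p-1)}\rho(r)$, which drives the entire argument.

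From this identity two pushforward relations follow by a short computation: (a) the gradient norms agree, $\|\nabla u\|_{L^p(B_R^N)} = \|\nabla v\|_{L^p(B_1^p)}$; and (b) the weighted measure is pushed forward to Lebesgue measure, $\w_N V_p(r)\,r^{N-1}\,dr = \w_p\, \rho^{p-1}\,d\rho$. For (b) the essential observation is that the exponential factor in $V_p(r)$ is exactly $\rho(r)^p$, which is precisely the purpose of the definition of $V_p$. Since $u(R)=0$ corresponds to $v(1)=0$, the map $u\mapsto v$ is an isometric bijection between $\dot{W}^{1,p}_{0,\mathrm{rad}}(B_R^N)$ and $\dot{W}^{1,p}_{0,\mathrm{rad}}(B_1^p)$, and the functional transforms as
\begin{equation*}
\int_{B_R^N} \exp(\alpha |u|^{p'}) V_p(|x|)\,dx = \int_{B_1^p} \exp(\alpha |v|^{p'})\,dx.
\end{equation*}

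With this reduction both parts of the theorem become known statements about the classical Trudinger--Moser problem on $B_1^p$. Part (II) is the sharp radial Moser inequality on $B_1^p$, which gives boundedness of the supremum iff $\alpha \le \alpha_p = p\w_p^{1/(p-1)}$. For part (I), the sufficiency of $\gamma \le p'$ is immediate, while necessity is obtained by transporting the standard truncated logarithmic Moser functions from $B_1^p$ back to $B_R^N$ via $u = v \circ \rho$ to produce explicit counterexamples with $\int_{B_R^N} \exp(\alpha |u|^{\gamma}) V_p(|x|)\,dx = +\infty$ for any $\gamma > p'$ and any $\alpha > 0$.

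The main technical point is bookkeeping: verifying identities (a) and (b) requires careful manipulation of the exponents $(N-p)/(p-1)$, $(N-1)/(p-1)$, and $p'$, and the case $R=\infty$ must be handled on the same footing via the convention $R^{-(N-p)/(p-1)}=0$, under which $\rho(r)\to 1$ as $r\to\infty$ so that the transformation still maps $\re^N$ onto $B_1^p$. The conceptual insight is that $V_p$ is engineered so that the induced measure $V_p(|x|)\,dx$ is pushed forward to Lebesgue measure on $B_1^p$ under this change of variables; once this is observed, the subcritical weighted problem on $B_R^N$ coincides with the unweighted critical problem on $B_1^p$, and both sharp constants ($p'$ and $\alpha_p$) transfer verbatim from the classical theory.
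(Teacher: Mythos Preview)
Your change of variables is precisely the harmonic transplantation that the paper itself records in \S\ref{S relation} (formula \eqref{HT p in N}, with the roles of $m$ and $N$ there corresponding to your $N$ and $p$), so identities (a) and (b) are correct. The gap is that the theorem is stated for all real $p\in(1,N)$: when $p\notin\N$ there is no ball $B_1^p\subset\re^p$, no space $\dot W^{1,p}_{0,\mathrm{rad}}(B_1^p)$, and hence no ``classical Trudinger--Moser inequality on $B_1^p$'' to invoke. What your map actually produces is the pair of one-dimensional weighted integrals $\w_p\int_0^1 |v'(\rho)|^p \rho^{p-1}\,d\rho$ and $\w_p\int_0^1 e^{\alpha|v|^{p'}}\rho^{p-1}\,d\rho$; these are well-defined for every real $p>1$, but the sharp threshold $\alpha_p$ for this one-dimensional problem is not something you can import from the $N$-Laplacian literature unless $p$ happens to be an integer. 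The paper is explicit about exactly this restriction when it presents the transplantation in \S\ref{S relation}.

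The paper's own proof in \S\ref{S bdd} avoids the issue by going one step further: composing your $\rho$ with the Moser substitution $t=-p\log\rho$ lands directly on the one-dimensional problem $M_p$ of \eqref{1dim problem}, whose finiteness follows for every real $p>1$ from Adams' Lemma (Theorem~A), with no appeal to $p$-dimensional geometry. The optimality parts are then handled by the explicit Moser-type sequence $u_k$ and the power-type test function $\varphi_\beta$, again constructed directly in $B_R^N$. To salvage your argument for non-integer $p$ you would have to perform this further reduction and cite Adams or Jodeit, which amounts to reproducing the paper's proof. In summary: for $p\in\N$ your route is correct and coincides with \S\ref{S relation}; for general real $p$ it is incomplete as written.
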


Also, we obtain the existence of a maximizer of $T_{p, \alpha}^{\rm rad}$ as follows. 
Concerning the existence and the non-existence of the maximization problems $T_p, T_p^{\rm rad}$ for other inequalities in Table \ref{subcritical table non-rad}, Table \ref{subcritical table} and Table \ref{critical table}, see e.g. \cite{Au, Ta, Chou-Chu, CC, AS, AS TM, D, HK, II, S(JDE)}.

\begin{theorem}\label{Thm WTM max}
Let $1<p< N$ and $R \in (0, \infty]$. For any $\alpha \le \alpha_p$, there exists a maximizer of $T^{\rm rad}_{p, \alpha}$. 
\end{theorem}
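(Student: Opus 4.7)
The plan is to reduce Theorem \ref{Thm WTM max} to the classical radial Trudinger--Moser extremal problem on the unit ball in dimension $p$ via an explicit radial change of variables. The definition of the weight $V_p$ is engineered precisely so that this reduction works identically for every $R\in(0,\infty]$, which is the feature that makes both the finite and infinite radius cases amenable to one unified argument.

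Concretely, I would introduce the radial change of variables $\rho=\rho(r)$ on $(0,R]$ determined by the separable ODE
\begin{equation*}
\rho'(r)=\(\tfrac{\w_p}{\w_N}\)^{\frac{1}{p-1}}\rho(r)\,r^{-\frac{N-1}{p-1}},\qquad \rho(R)=1,
\end{equation*}
which integrates to
\begin{equation*}
\rho(r)=\exp\!\[-\tfrac{p-1}{N-p}\(\tfrac{\w_p}{\w_N}\)^{\frac{1}{p-1}}\(r^{-\frac{N-p}{p-1}}-R^{-\frac{N-p}{p-1}}\)\].
\end{equation*}
A quick analysis shows that $\rho$ is an increasing bijection from $(0,R]$ onto $(0,1]$, using the convention $R^{-(N-p)/(p-1)}:=0$ when $R=\infty$ from Definition \ref{Def V_p}. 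Setting $u(x):=w(\rho(|x|))$ then produces a bijection between radial $u\in\dot W^{1,p}_{0,{\rm rad}}(B_R^N)$ and radial $w\in W^{1,p}_{0,{\rm rad}}(B_1^p)$.

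A direct computation, using the ODE for the gradient identity and the explicit form of $V_p$ for the measure identity, yields
\begin{equation*}
\int_{B_R^N}|\nabla u|^p\,dx=\int_{B_1^p}|\nabla w|^p\,dy,\qquad F_{p,\alpha}(u)=\int_{B_1^p}\exp\(\alpha|w|^{p'}\)\,dy.
\end{equation*}
Consequently, the map $u\mapsto w$ sends the unit ball of $\dot W^{1,p}_{0,{\rm rad}}(B_R^N)$ bijectively onto the unit ball of $W^{1,p}_{0,{\rm rad}}(B_1^p)$, and $T^{\rm rad}_{p,\alpha}$ coincides with the classical radial Trudinger--Moser supremum on $B_1^p$ at exponent $\alpha$.

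Once this identification is in place, existence of a maximizer for $\alpha\le\alpha_p=p\w_p^{1/(p-1)}$ becomes a known fact. The subcritical range $\alpha<\alpha_p$ is easy: Vitali's theorem applies along any radial maximizing sequence because the exponential is uniformly integrable at the subcritical level, so weak limits are maximizers. The main obstacle is the critical case $\alpha=\alpha_p$, where a maximizing sequence $\{w_n\}\subset W^{1,p}_{0,{\rm rad}}(B_1^p)$ could concentrate at the origin and the weighted exponential could fail to converge. Here, however, the reduced problem is precisely the classical Moser extremal problem on $B_1^p$, so the concentration-compactness analysis of Carleson--Chang ($p=2$) and its generalizations by Lin and Flucher (for general $p$) applies: the level attained by any concentrating sequence is strictly below the supremum, forcing strong convergence up to subsequence and hence a radial extremal $w_{\ast}\in W^{1,p}_{0,{\rm rad}}(B_1^p)$. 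Pulling $w_\ast$ back via $\rho^{-1}$ yields the desired maximizer of $T^{\rm rad}_{p,\alpha}$ in $\dot W^{1,p}_{0,{\rm rad}}(B_R^N)$.
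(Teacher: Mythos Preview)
Your reduction via the radial change of variables $\rho(r)$ is exactly the harmonic transplantation that the paper itself describes in \S\ref{S relation}, and your two integral identities are correct. The gap is that the target ``$B_1^p$'' is only an honest Euclidean ball when $p\in\N$; for general real $p\in(1,N)$ there is no ambient space $\re^p$, no Sobolev space $W^{1,p}_{0,{\rm rad}}(B_1^p)$, and the extremal results you cite (Carleson--Chang, Lin, Flucher) are theorems about the $N$-Laplacian on balls in \emph{integer} dimension $N$. The paper makes precisely this point in \S\ref{S relation}, noting that the transplantation to the classical Trudinger--Moser problem is available only for natural numbers $p$.

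The paper's proof avoids this obstacle by pushing the substitution one step further: instead of stopping at a putative $p$-dimensional ball, it applies the Moser change $t=-p\log\rho$ and lands on the purely one-dimensional problem $M_p$ of (\ref{1dim problem}), which makes sense for every real $p>1$. For the critical case $\alpha=\alpha_p$ it then cites \cite{CC} when $p\in\N$ and \cite{HL} when $p\notin\N$; for $\alpha<\alpha_p$ it runs a direct compactness argument in $\dot W^{1,p}_{0,{\rm rad}}(B_R^N)$ via the compact weighted embedding of Lemma~\ref{Lem cpt}. Your argument is easily repaired along the same lines: rewrite your two identities as one-dimensional weighted integrals over $(0,1)$ with weight $\rho^{p-1}$ (which is what they really are), and for non-integer $p$ replace the Lin/Flucher citation by \cite{HL}.
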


In the cases where $R=1$, Theorem \ref{Thm WTM max} was already obtained by \cite[Theorem C]{HL} essentially. However, there is an  error in \cite[Theorem C]{HL} due to a miscalculation. Therefore, one of our motivations is to revise the error. 

\begin{remark}\label{Rem approximation}
Let $R \in (0, \infty)$. Since
\begin{align*}
 \frac{|x|^{-\ep} -R^{-\ep}}{\ep} = \log \frac{R}{|x|} + o(1) \,\,(\ep \to 0),
\end{align*}
we have $V_p(|x|) \to R^{-N}$ as $p \nearrow N$ for any $x \in B_R^N \setminus \{ 0\}$ which implies that
\begin{align*}
F_{p, \alpha} (u) \to \frac{1}{R^N} \int_{B_R^N} \exp (\alpha |u|^{N'}) \,dx\quad \text{as}\,\, p \nearrow N
\end{align*}
for any $u \in \dot{W}_{0, {\rm rad}}^{1,N} (B_R^N)$ and for any $\alpha >0$. 
Namely, our functional $F_{p, \alpha}(u)$ is a $W^{1,p}$-approximation of the original Trudinger-Moser functional. For another kind of $W^{1,p}$-approximation, see \cite{IH}. 
Furthermore, our optimal exponent $\alpha_p$ and the concentration level of $T_p^{\rm rad}$ are also a $W^{1,p}$-approximation of them in the critical case $p=N$, see also Remark \ref{Rem concentration limit} in \S \ref{S max}.
\end{remark}

We give a generalization of the maximization problem $T_{p,\alpha}^{\rm rad}$ as a corollary of Theorem \ref{Thm WTM bdd} and Theorem \ref{Thm WTM max}, which is corresponding to the singular Trudinger-Moser inequality in Table \ref{critical table}. 

\begin{cor}\label{Cor gene}
Let $1<p<N, R \in (0, \infty]$ and $\beta \in [0, p)$. Then  
\begin{align*}
    T^{\rm rad}_{p, \alpha, \beta} &:= \sup \left\{ \int_{B_R^N} \exp \( \alpha |u|^{p'} \) V_{p, \beta} (|x|) \,dx \,\,\middle| \,\, u \in 
 \dot{W}_{0, {\rm rad}}^{1,p} (B_R^N),\,\|\nabla u \|_{L^p (B_R^N)} \le 1 \right\} < \infty \\ 
 &\iff \alpha \le \alpha_{p, \beta} := (p-\beta) \,\w_p^{\frac{1}{p-1}}= \alpha_p \( 1- \frac{\beta}{p} \),
\end{align*}
where for $x \in B_R^N \setminus \{ 0\}$
\begin{align*}
V_{p, \beta}(|x|) := \( \frac{\w_p}{\w_N} \)^{p'} |x|^{-(N-1) p'} \exp \left[ -\frac{p-1}{N-p} (p-\beta ) \( \frac{\w_p}{\w_N} \)^{\frac{1}{p-1}} \( |x|^{-\frac{N-p}{p-1}} -R^{-\frac{N-p}{p-1}} \) \right].
\end{align*}
Furthermore, the maximization problem $T^{\rm rad}_{p, \alpha, \beta}$ is attained for any $\alpha \le \alpha_{p, \beta}$. 
\end{cor}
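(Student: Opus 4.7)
My plan is to deduce Corollary~\ref{Cor gene} directly from Theorems~\ref{Thm WTM bdd} and \ref{Thm WTM max} by a single radial dilation $y=\lambda x$ that converts the $\beta$-weighted problem into the $\beta=0$ problem on a rescaled ball. The key observation is that $V_{p,\beta}$ and $V_p$ differ only through the factor $p-\beta$ versus $p$ in the exponent, and the extra multiplicative constant produced by $|x|^{-(N-p)/(p-1)}\mapsto \lambda^{(N-p)/(p-1)}|y|^{-(N-p)/(p-1)}$ can be calibrated to cancel this discrepancy by choosing $\lambda^{(N-p)/(p-1)}=p/(p-\beta)$, i.e.\
$$
\lambda := \( \frac{p}{p-\beta} \)^{\frac{p-1}{N-p}}, \qquad R':=\lambda R
$$
(with $R'=\infty$ when $R=\infty$, consistent with the convention $(\infty)^{-(N-p)/(p-1)}=0$).

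With this $\lambda$, a direct computation --- using $(N-1)p'-N=(N-p)/(p-1)$ to handle the $|x|^{-(N-1)p'}$ factor together with $dx=\lambda^{-N}dy$ --- yields the weight identity
$$
V_{p, \beta}(|x|)\,dx \;=\; \frac{p}{p-\beta}\, V_p(|y|)\,dy,
$$
where $V_p$ on the right is the weight from Definition~\ref{Def V_p} with the ball $B_{R'}^N$. Next I would transfer functions by $\hat u(y):=\lambda^{-(N-p)/p}u(y/\lambda)$, which is a bijection $\dot W^{1,p}_{0,{\rm rad}}(B_R^N)\to \dot W^{1,p}_{0,{\rm rad}}(B_{R'}^N)$ preserving the Dirichlet norm. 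Since $|u(x)|^{p'}=\lambda^{(N-p)/(p-1)}|\hat u(y)|^{p'}=\tfrac{p}{p-\beta}|\hat u(y)|^{p'}$, combining with the weight identity produces
$$
\int_{B_R^N}\exp\(\alpha |u|^{p'}\) V_{p, \beta}(|x|)\,dx \;=\; \frac{p}{p-\beta}\,F_{p, \tilde\alpha}(\hat u),\qquad \tilde\alpha := \frac{p}{p-\beta}\,\alpha,
$$
so that $T_{p, \alpha, \beta}^{\rm rad}=\frac{p}{p-\beta}\, T_{p, \tilde\alpha}^{\rm rad}$, with the latter evaluated on $B_{R'}^N$.

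The conclusions are then immediate. By Theorem~\ref{Thm WTM bdd}(II), $T_{p, \tilde\alpha}^{\rm rad}<\infty$ iff $\tilde\alpha\le \alpha_p=p\,\w_p^{1/(p-1)}$, which translates precisely to $\alpha\le (p-\beta)\,\w_p^{1/(p-1)}=\alpha_{p, \beta}$. For attainability, Theorem~\ref{Thm WTM max} furnishes a maximizer $\hat u^*$ of $T_{p, \tilde\alpha}^{\rm rad}$ on $B_{R'}^N$ for every $\tilde\alpha\le\alpha_p$, and the inverse scaling $u^*(x):=\lambda^{(N-p)/p}\hat u^*(\lambda x)$ then maximizes $T_{p, \alpha, \beta}^{\rm rad}$ on $B_R^N$. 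The only step requiring genuine care is a clean verification of the weight identity simultaneously for $R<\infty$ and $R=\infty$, but this is algebraic bookkeeping rather than a conceptual obstacle.
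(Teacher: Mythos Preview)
Your proposal is correct and follows essentially the same strategy as the paper: reduce to the case $\beta=0$ by a radial change of variables combined with a rescaling of the function, then invoke Theorems~\ref{Thm WTM bdd} and~\ref{Thm WTM max}. The only cosmetic difference is that you use a pure dilation $y=\lambda x$ (which sends $B_R^N$ to $B_{\lambda R}^N$), whereas the paper's transplantation keeps the ball $B_R^N$ fixed; both yield the identical reduction $T^{\rm rad}_{p,\alpha,\beta}=c\,T^{\rm rad}_{p,\,p\alpha/(p-\beta)}$ for a harmless constant $c>0$.
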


In the case $\beta = 0$, Corollary \ref{Cor gene} coincides with Theorem \ref{Thm WTM bdd} and Theorem \ref{Thm WTM max} since $T^{\rm rad}_{p, \alpha, 0}= T^{\rm rad}_{p, \alpha}$ and $V_{p, 0}(|x|)=V_p(|x|)$. 
As a consequence, we obtain the improved figure (Table \ref{subcritical table improved}) including Theorem \ref{Thm WTM bdd} and Corollary \ref{Cor gene}.

\begin{table}[h]
\caption{Boundedness of $T_p^{\rm rad} \,(p<N, R \in (0, \infty])$}
\label{subcritical table improved}
\centering
\begin{tabular}{|r|r|r|r|}
\hline
$f(u)$ & $V(|x|)$ & Name of ineq. & Supplement \\  \hline 
$|u|^p$ & $|x|^{-p}$ & Hardy &   \\ \hline
$|u|^q\,(p< q < p^*)$ & $|x|^{-A_q}$ & Hardy-Sobolev  &  $A_q := \frac{p^* -q}{p^* -p}p >0$ \\ \hline
$|u|^{p^*}$ & $1$ & Sobolev &  $p^*:= \frac{Np}{N-p}$ \hspace{2em} {}\\ \hline
$|u|^q\,(p^*< q < \infty)$ & $|x|^{\,B_q}$ & (H\'enon or Ni) &  $B_q := \frac{q-p^*}{p^* -p}p >0$ \\ \hline
$\exp ( \alpha_{p,\beta} |u|^{p'} )$ & $V_{p, \beta}  (|x|)\,(\beta \in (0, p))$ & Corollary \ref{Cor gene}  & $\alpha_{p, \beta} := \alpha_p (1-\beta/p)$ \\ \hline
$\exp ( \alpha_p |u|^{p'} )$ & $V_p(|x|)$ & Theorem \ref{Thm WTM bdd} & $\alpha_p := p \w_p^{\frac{1}{p-1}}$ \\ \hline
\end{tabular}
\end{table}

\noindent
{\bf Outline of the paper.} 
The paper is organized as follows. In \S \ref{S bdd}, we prove Theorem \ref{Thm WTM bdd} which is the boundedness of $T_p^{\rm rad}$ and the optimal exponent $\alpha_p$. We reduce $T_p^{\rm rad}$ to the one dimensional maximization problem $M_p$. Also, we mention that our weight function $V_p(|x|)$ is optimal in some sense, see Remark \ref{Rem V_p optimal}. In \S \ref{S max}, we prove Theorem \ref{Thm WTM max} via $M_p$. Note that the existence of a maximizer of $M_p$ is already  shown by \cite{CC, HL}. In \S \ref{S relation}, we show an equivalence between our inequality with $p \in \N$ and the original Trudinger-Moser inequality via the harmonic transplantation by \cite{ST, ST(HT)}. Also, we prove Corollary \ref{Cor gene} via the transformation based on a harmonic transplantation. Furthermore, via the transformation, we also show an equivalence between the two elliptic equations (\ref{eq v}), (\ref{eq u}) associated with the maximization problems on radial Sobolev spaces. Note that this equivalence is available only for natural numbers $p \in (1, N)$. Therefore, in \S \ref{S elliptic}, we show the existence of a radial weak solution of the elliptic equation (\ref{eq u}) for real numbers $p \in (1,N)$ via variational method without the transformation. 

\noindent
{\bf Notation.}
Set $\w_p = \frac{2\pi^{\frac{p}{2}}}{\Gamma \( \frac{p}{2} \)}$. Note that, if $p =N \in \N$, then $\w_{N}$ is surface area of unit sphere $\mathbb{S}^{N-1}$ in $\re^N$. $B_R^N$ denotes open ball in $\re^N$ with center $0$, with radius $R \in (0, \infty)$. For unified notation, we set $B_{\infty}^N= \re^N$. 
$\dot{W}_0^{1,p} (B_R^N)$ is the completion of $C_c^\infty (B_R^N)$ with respect to $\| \nabla (\cdot)\|_p$. When $R=\infty$ i.e. $B_R^N=\mathbb{R}^N$, $\|\nabla(\cdot)\|_p$ becomes seminorm yielding the same value for two functions that differ only by an additive constant. Thus, the quotient space $\dot{W}^{1,p}(\mathbb{R}^N)/\mathbb{R}$ defines a separable and reflexive Banach space since it can be identified with a closed subspace of $L^{p^*}(\mathbb{R}^N)$. For simplicity we write $\dot{W}^{1,p}(\mathbb{R}^N)$ insted of $\dot{W}^{1,p}(\mathbb{R}^N)/\mathbb{R}$ having in mind that the elements of $\dot{W}^{1,p}(\mathbb{R}^N)$ are equivalent classes. For the detail, see e.g. \cite{BGV}. 
Throughout this paper, if $u$ is a radial function that should be written as $u(x) = \tilde{u}(|x|)$ by some function $\tilde{u} = \tilde{u}(r)$, we write $u(x)= u(|x|)$ with admitting some ambiguity. Set 
$X_{\rm rad} = \{ u \in X \,\,|\,\, u(x) = u(|x|) \,\,\}$. 
We simply write the space-dependent function $u(x)$ as $u$ depending on the circumstances. Also, we use $C$ or $C_i \,(i \in \N)$ as positive constants. If necessary, we denote those by $C(\ep)$ when constants depend on $\ep$. For $q\in [1,\infty]$, we denote by $q'$ the H\"older conjugate of $q$, i.e. $1/q+1/q'=1$. 

%
%
\section{Boundedness of $T_{p, \alpha}^{\rm rad}$ and optimality: Proof of Theorem \ref{Thm WTM bdd}}\label{S bdd}

\begin{proof}{\it (Theorem \ref{Thm WTM bdd})}
\noindent
{\rm (I)} First, let $\gamma > p'$. Then we shall show that 
\begin{align*}
\int_{B_R^N} \exp ( |u|^{\gamma}) V_p (|x|) \,dx = \infty\,\,\text{for some}\,\, u \in \dot{W}_{0, {\rm rad}}^{1,p} (B_R^N).
\end{align*}
Let $\beta = \beta (\gamma) < \frac{N-p}{p}$ satisfy $\beta \gamma > \frac{N-p}{p-1}$. Also, let $\delta \in (0, \frac{R}{2})$ satisfy 
\begin{align*}
r^{-\beta \gamma} - \frac{p-1}{N-p} p \( \frac{\w_p}{\w_N} \)^{\frac{1}{p-1}} \( r^{-\frac{N-p}{p-1}} -R^{-\frac{N-p}{p-1}} \) \ge 1 \quad \text{for any} \,\, r \in (0, \delta).
\end{align*}
Consider
\begin{align*}
\varphi_\beta (x) = 
\begin{cases}
|x|^{-\beta} \quad &\text{if}\,\, x \in B_\delta^N,\\
\delta^{-1-\beta} \( 2\delta - |x|\)&\text{if}\,\, x \in B_{2\delta}^N \setminus B_\delta^N,\\
0 &\text{if} x \in B_R^N \setminus B_{2\delta}^N.
\end{cases}
\end{align*}
We easily see that $\varphi_\beta \in \dot{W}_{0, {\rm rad}}^{1,p} (B_R^N)$. On the other hand, we have
\begin{align*}
&\int_{B_R^N} \exp ( |\varphi_\beta|^{\gamma}) V_p (|x|) \,dx\\
&\ge C \int_0^\delta \exp \left[r^{-\beta \gamma} -\frac{p-1}{N-p} p \( \frac{\w_p}{\w_N} \)^{\frac{1}{p-1}} \( r^{-\frac{N-p}{p-1}} -R^{-\frac{N-p}{p-1}} \) \right] r^{-(N-1)p' + N-1}\,dr\\
&\ge C \int_0^\delta r^{-\frac{N-1}{p-1}}\,dr = \infty.
\end{align*}
Next, let $\gamma = p'$. Then we shall show that 
\begin{align*}
\int_{B_R^N} \exp ( \alpha |u|^{p'}) V_p (|x|) \,dx < \infty\,\,\text{for any}\,\, u \in \dot{W}_{0, {\rm rad}}^{1,p} (B_R^N)\,\,\text{and}\,\,\alpha >0.
\end{align*}
By the density of $C_{c, \text{rad}}^\infty (B_R^N)$ in $\dot{W}_{0, \text{rad}}^{1, p}(B_R^N)$, for any $u \in \dot{W}_{0, \text{rad}}^{1, p}(B_R^N)$ there exists $v \in C_{c, \text{rad}}^\infty (B_R^N)$ such that $\| \nabla (u-v) \|^{p'}_{p} \le \frac{\alpha_p}{\alpha 2^{p' -1}}$. Then we have
\begin{align*}
&\int_{B_R^N} \exp \( \alpha |u|^{p'} \) V_p(|x|)\,dx\\
&\le \int_{B_R^N}\exp \( \alpha 2^{p' -1} \( |u-v |^{p'} + |v|^{p'} \) \) V_p(|x|)\,dx\\
&\le \max_{x \in B_R^N} \left[ \exp \( \alpha 2^{p' -1} |v(x) |^{p'}\) \right] \int_{\Omega} \exp \( \alpha 2^{p' -1} |u-v |^{p'}  \)V_p(|x|)\,dx \\
&\le C(\alpha, u) \int_{\Omega}\exp \( \alpha 2^{p' -1} \| \nabla (u-v) \|^{p'}_p \frac{|u-v |^{p'}}{\| \nabla (u-v) \|^{p'}_p}  \)V_p(|x|)\,dx\\
&\le C(\alpha, u) T^{\rm rad}_{p, \alpha_p} < \infty.
\end{align*}
$T^{\rm rad}_{p, \alpha_p} < \infty$ follows from {\rm (II)}.
Therefore we obtain 
\begin{align*}
\int_{B_R^N} \exp ( \alpha |u|^{\gamma}) V_p (|x|) \,dx < \infty\,\,\text{for all}\,u \in \dot{W}_{0, {\rm rad}}^{1,p} (B_R^N)\,\,\text{and}\,\,\alpha >0 \iff \gamma \le p'.
\end{align*}

\noindent
{\rm (II)} First, we show that $T^{\rm rad}_{p, \alpha} = \infty$ for $\alpha > \alpha_p$. Set
\begin{align*}
u_k (r) &= 
\begin{cases}
k^{\frac{p-1}{p}} \w_p^{-\frac{1}{p}} \quad &(0 \le r \le r_k),\\
k^{\frac{p-1}{p}} \w_p^{-\frac{1}{p}} \frac{p-1}{N-p} \( \frac{\w_p}{\w_N} \)^{\frac{1}{p-1}} \frac{1}{k} \( r^{-\frac{N-p}{p-1}} -R^{-\frac{N-p}{p-1}} \) \quad &(r_k < r <R),
\end{cases}\\
&\quad \text{where}\,\,
r_k^{-\frac{N-p}{p-1}} -R^{-\frac{N-p}{p-1}} = k \frac{N-p}{p-1} \( \frac{\w_N}{\w_p} \)^{\frac{1}{p-1}}.
\end{align*}
Direct calculations show that 
\begin{align*}
\int_{B_R^N} |\nabla u_k |^p \,dx 
&= \w_N \int_{r_k}^R |u_k' (r) |^p r^{N-1} \,dr\\
&= \w_N \int_{r_k}^R \left| k^{-\frac{1}{p}} \w_p^{-\frac{1}{p}}  \( \frac{\w_p}{\w_N} \)^{\frac{1}{p-1}} r^{-\frac{N-1}{p-1}} \right|^p r^{N-1} \,dr\\
&= \( \frac{\w_p}{\w_N} \)^{\frac{1}{p-1}} \frac{1}{k} \int_{r_k}^R r^{-\frac{N-1}{p-1}} \,dr\\
&= \( \frac{\w_p}{\w_N} \)^{\frac{1}{p-1}} \frac{1}{k} \frac{p-1}{N-p} \( r_k^{-\frac{N-p}{p-1}} -R^{-\frac{N-p}{p-1}} \) =1
\end{align*}
and
\begin{align*}
&\int_{B_R^N} \exp (\alpha |u_k|^{p'}) V_p (|x|) \,dx 
\ge \exp \( \alpha k \w_p^{-\frac{1}{p-1}} \) \int_{B_{r_k}^N} V_p (|x|) \,dx\\
&= \exp \( \alpha k \w_p^{-\frac{1}{p-1}} \) \( \frac{\w_p}{\w_N} \)^{p'} \w_N \int_0^{r_k} r^{-\frac{N-1}{p-1}} \exp \left[ -\frac{p-1}{N-p} p \( \frac{\w_p}{\w_N} \)^{\frac{1}{p-1}} \( r^{-\frac{N-p}{p-1}} -R^{-\frac{N-p}{p-1}} \) \right]\,dr\\
&= \exp \( \alpha k \w_p^{-\frac{1}{p-1}} \) \frac{\w_p}{p}  \exp \left\{ -\frac{p-1}{N-p} p \( \frac{\w_p}{\w_N} \)^{\frac{1}{p-1}} \( r_k^{-\frac{N-p}{p-1}} -R^{-\frac{N-p}{p-1}} \) \,\right\}\\
&= \frac{\w_p}{p}  \exp \left\{  k \w_p^{-\frac{1}{p-1}} \( \alpha - p \w_p^{\frac{1}{p-1}} \) \right\} \to \infty \quad (k \to \infty).
\end{align*}
Therefore, $T^{\rm rad}_{p, \alpha} = \infty$ for $\alpha > \alpha_p$. 
Next, we show that $T^{\rm rad}_{p, \alpha} < \infty$ for $\alpha = \alpha_p$. Consider the following Moser type transformation (Ref. \cite{M}).

\begin{align*}
w(t) = \alpha_p^{\frac{p-1}{p}} u(r), \quad t=  \frac{p-1}{N-p} p \( \frac{\w_p}{\w_N} \)^{\frac{1}{p-1}} \( r^{-\frac{N-p}{p-1}} -R^{-\frac{N-p}{p-1}} \), \, \alpha_p = p \w_p^{\frac{1}{p-1}}
\end{align*}

\noindent
Since
\begin{align*}
\frac{dt}{dr}=  - p \( \frac{\w_p}{\w_N} \)^{\frac{1}{p-1}} r^{-\frac{N-1}{p-1}},
\end{align*}
we have
\begin{align*}
\int_{B_R^N} |\nabla u |^p \,dx 
&= \w_N \int_0^R |u'(r)|^p r^{N-1}\,dr\\
&= \w_N \,\alpha_p^{1-p} \int_0^\infty |w'(t)|^p \left| \frac{dt}{dr} \right|^{p-1} r^{N-1}\,dt\\
&= \int_0^\infty |w'(t)|^p \,dt \le 1,\\
\int_{B_R^N} \exp (\alpha_p |u|^{p'}) V_p (|x|) \,dx 
&= \w_N \int_0^R  \exp (\alpha_p |u(r)|^{p'}) \( \frac{\w_p}{\w_N} \)^{p'} r^{-\frac{N-1}{p-1}} e^{-t} \,dr\\
&= \w_N \( \frac{\w_p}{\w_N} \)^{p'} \int_0^\infty e^{|w(t)|^{p'} -t} r^{-\frac{N-1}{p-1}} \left| \frac{dr}{dt} \right| \,dt\\
&= \frac{\w_p}{p} \int_0^\infty e^{|w(t)|^{p'} -t} \,dt.
\end{align*}

\noindent
Here, we recall the following result.

\begin{ThmA}(\cite{Adams} Lemma 1)
Let $a(s,t)$ be a non-negative measurable function on $(-\infty, + \infty) \times [0, + \infty)$ such that (a.e.)
\begin{align*}
a(s, t) \le 1, \,\, \text{when}\,\, 0< s<t,\\
\sup_{t>0} \( \int_{-\infty}^0 + \int_t^\infty a(s, t)^{p'} \,ds \)^{1/p'} =b < \infty.
\end{align*}
Then there is a constant $c_0 = c_0(p, b)$ such that if for $\phi \ge 0$, 
\begin{align*}
\int_{-\infty}^\infty \phi(s)^p \,ds \le 1,
\end{align*}
then
\begin{align*}
\int_{0}^\infty e^{-F(t)} \,dt \le c_0,
\end{align*}
where
\begin{align*}
F(t) = t- \( \int_{-\infty}^\infty a(s, t) \phi (s) \,ds \)^{p'}.
\end{align*}
\end{ThmA}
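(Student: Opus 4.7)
The statement is Adams' Lemma 1 from \cite{Adams}, and my plan is to follow Adams' original distribution-function argument. The starting point is the layer-cake representation
\begin{equation*}
\int_0^\infty e^{-F(t)}\,dt = \int_{-\infty}^\infty |E_\lambda|\,e^{-\lambda}\,d\lambda, \qquad E_\lambda := \{t>0 : F(t)<\lambda\},
\end{equation*}
which reduces everything to showing a polynomial-in-$\lambda_+$ bound $|E_\lambda| \le C(b,p)(1+\lambda_+)$. By definition, $t \in E_\lambda$ is equivalent to the pointwise inequality $G(t)^{p'} > t-\lambda$, where $G(t) := \int_{-\infty}^\infty a(s,t)\phi(s)\,ds$.

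\textbf{Splitting the kernel integral.} For each fixed $t > 0$, I would split $G(t) = I_{\rm in}(t) + I_{\rm out}(t)$ with $I_{\rm in}(t) := \int_0^t a\phi$ and $I_{\rm out}(t) := \int_{(-\infty,0)\cup(t,\infty)} a\phi$. On $(0,t)$, the hypothesis $a \le 1$ together with H\"older gives $I_{\rm in}(t) \le t^{1/p'} K(t)^{1/p}$, where $K(t) := \int_0^t \phi^p\,ds$; on the complement, the $L^{p'}$-bound on $a(\cdot,t)$ together with H\"older gives $I_{\rm out}(t) \le b\,H(t)^{1/p}$, where $H(t) := \int_{(-\infty,0)\cup(t,\infty)} \phi^p\,ds$. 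The normalization $\|\phi\|_p \le 1$ enforces $K(t) + H(t) \le 1$.

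\textbf{Sharp $p'$-expansion and measure estimate.} The next step is to raise $G \le t^{1/p'}K^{1/p} + bH^{1/p}$ to the $p'$-th power via the \emph{first-order} expansion $(A+B)^{p'} \le A^{p'} + p'(A+B)^{p'-1}B$, extracting a clean copy of $t$: $G^{p'} \le t\,K^{1/(p-1)} + C(b,p)\bigl((tH)^{1/p} + H^{1/(p-1)}\bigr)$. Combining this with the Bernoulli-type inequality $1 - K^{1/(p-1)} \ge c(p)\,H$, valid because $K + H \le 1$, would produce $F(t) \ge c(p)\,tH(t) - C(b,p)(tH)^{1/p} - C(b,p)$; Young's inequality then absorbs the middle term into the first and yields $F(t) \ge \tfrac{c(p)}{2}\,tH(t) - C'(b,p)$. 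Hence $t \in E_\lambda$ forces $tH(t) \le C(b,p)(1+\lambda_+)$, and since $H$ is non-increasing while $\int \phi^p \le 1$, a short analysis of the superlevel sets of $tH$ gives $|E_\lambda| \le C(b,p)(1+\lambda_+)$. Feeding this back into the layer-cake formula yields the universal constant $c_0 = c_0(b,p)$.

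\textbf{Main obstacle.} I expect the delicate step to be the \emph{sharp} $p'$-power expansion: a naive Jensen bound $(A+B)^{p'} \le 2^{p'-1}(A^{p'}+B^{p'})$ multiplies the critical term $tK^{1/(p-1)}$ by $2^{p'-1} > 1$, after which the subtraction $t - G^{p'}$ in $F$ can no longer be made non-negative modulo constants. One must use the first-order form above and pair it with the Bernoulli inequality $1 - K^{1/(p-1)} \ge c(p)H$, so that the coefficient of $t$ in the lower bound for $F$ is exactly $1 - K^{1/(p-1)}$; both estimates hinge crucially on the global normalization $K + H \le 1$. Once this algebraic identity is in place, the remaining ingredients (H\"older, Young, layer-cake, and the superlevel-set analysis exploiting monotonicity of $H$) are essentially routine.
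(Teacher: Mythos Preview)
The paper does not prove Theorem~A at all: it is quoted verbatim as Lemma~1 of \cite{Adams} and then immediately applied. So there is no ``paper's own proof'' to compare against; your proposal is effectively an attempt to reproduce Adams' original argument. The overall architecture you describe---layer-cake formula, the splitting $G=I_{\rm in}+I_{\rm out}$, the H\"older bounds $I_{\rm in}\le t^{1/p'}K^{1/p}$ and $I_{\rm out}\le bH^{1/p}$, and the first-order expansion $(A+B)^{p'}\le A^{p'}+p'(A+B)^{p'-1}B$ together with $1-K^{1/(p-1)}\ge c(p)H$---is indeed Adams' strategy, and those intermediate inequalities are correct.

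There is, however, a genuine gap in your final step. From $F(t)\ge \tfrac{c(p)}{2}\,tH(t)-C'(b,p)$ you conclude that $t\in E_\lambda$ forces $tH(t)\le M:=C(b,p)(1+\lambda_+)$, and then assert that ``a short analysis of the superlevel sets of $tH$'' gives $|E_\lambda|\le CM$. This does not follow: the sublevel set $\{t>0:tH(t)<M\}$ can have infinite measure. Take $b=0$, $\phi=T^{-1/p}\chi_{[0,T]}$; then $H(t)=0$ for all $t\ge T$, so $tH(t)=0<M$ on $[T,\infty)$, yet $|E_\lambda|$ is finite (indeed $F(t)=t-T$ there). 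Your lower bound $F\ge c\,tH-C$ degenerates to $F\ge -C$ precisely when $H$ vanishes, and in that regime it carries no information about the size of $E_\lambda$. Adams' actual proof does not proceed via $tH$ alone; after obtaining the uniform lower bound $F\ge -b^{p'}$ (which you do get, and which handles $\lambda\to-\infty$), he bounds $|E_\lambda|$ by a separate argument that distinguishes the regimes $H(t)$ bounded away from zero versus $H(t)$ small, using in the latter case that $I_{\rm in}(t)$ stabilizes once most of the $L^p$-mass of $\phi$ has been captured. You need to supply that second half of the dichotomy; the monotonicity of $H$ and the constraint $\int\phi^p\le1$ by themselves are not enough.
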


\noindent
Now, we apply Theorem A as follows.
\begin{align*}
\phi (s) = 
\begin{cases}
w'(s)\quad &(s>0),\\
0 \quad &(s \le 0),
\end{cases}
\quad a(s,t) = 
\begin{cases}
1 \quad (0< s < t),\\
0 \quad (s \le 0, \,s \ge t)
\end{cases}
\end{align*} 

\noindent
Then, we obtain the following one dimensional maximization problem $M_p$ from $T^{\rm rad}_{p, \alpha_p}$, see also \cite{J}. 
\begin{align}\label{1dim problem}
T^{\rm rad}_{p, \alpha_p} &= \sup \left\{ \int_0^\infty e^{ |w|^{p'} -t} \,dt \, \middle| \, w \in C^1 [0, \infty ), w(0) =0, w' (t) \ge 0, \int_0^\infty |w'|^p \,dt \le 1\right\} \notag \\
&=: M_p < \infty 
\end{align}

\noindent
Therefore, $T^{\rm rad}_{p, \alpha} < \infty$ for $\alpha = \alpha_p$. 
Since $F_{p, \alpha}(u) \le F_{p, \alpha_p} (u)$ for $\alpha \le \alpha_p$, we have $T^{\rm rad}_{p, \alpha} < \infty \iff \alpha \le \alpha_p$. 
\end{proof}

\begin{remark}\label{Rem V_p optimal}
 Even if we replace $|x|^{-(N-1)p'}$ with $|x|^{-(N-1)p' -\ep} \,(\ep >0)$ in $V_p(|x|)$, we can show that $\alpha_p$ is optimal, namely, $T_{p, \alpha}^{\rm rad} < \infty$ for $\alpha < \alpha_p$ and $T_{p, \alpha}^{\rm rad} = \infty$ for $\alpha > \alpha_p$. However, in this case, we can show $T_{p, \alpha_p}^{\rm rad} = \infty$ by using the same test function $u_k$ in {\rm (II)} in the proof of Theorem \ref{Thm WTM bdd}. This is not an analogue of the result for the original TM inequality $(p=N)$ by Moser \cite{M}. In this sense, $|x|^{-(N-1)p'}$ in $V_p(|x|)$ is an optimal singularity to obtain the same result as \cite{M}.
\end{remark}

%
%
\section{Existence of a maximizer of $T_{p, \alpha}^{\rm rad}$: Proof of Theorem \ref{Thm WTM max}}\label{S max}

To show Theorem \ref{Thm WTM max}, we need the following lemma.

\begin{lemma}\label{Lem cpt}
Let $1<p<N, V_p(|x|)$ be given in Definition \ref{Def V_p}, and $a(x)\ge 0$ be a bounded function on $B_R^N$. Then for any $q \in [1, \infty)$ the embedding $\dot{W}_{0, {\rm rad}}^{1,p} (B_R^N) \hookrightarrow L^q(B^N_R; a(x) V_p(|x|)\,dx)$ is compact. Moreover the following estimate holds.
\begin{align*}
    \left(\int_{B^N_R}|u(x)|^qa(x)V_p(|x|)dx\right)^{1/q}
    \le\|a\|_{\infty}^{\frac{1}{q}}\omega_p^{\frac{1}{q}-\frac{1}{p}}p^{-1+\frac{1}{p}-\frac{1}{q}}\Gamma\left(\left(1-\frac{1}{p}\right)q+1\right)^{1/q} \|\nabla u\|_p.
\end{align*}
\end{lemma}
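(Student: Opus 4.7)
The plan is to reduce the weighted embedding to a one-dimensional problem via the Moser-type change of variables used in the proof of Theorem \ref{Thm WTM bdd} (II):
\begin{align*}
w(t) = \alpha_p^{(p-1)/p}\, u(r), \qquad t = \frac{p-1}{N-p}\, p \( \frac{\w_p}{\w_N} \)^{\frac{1}{p-1}} \( r^{-\frac{N-p}{p-1}} - R^{-\frac{N-p}{p-1}} \).
\end{align*}
Extended by density, this identifies $\dot{W}^{1,p}_{0,\text{rad}}(B_R^N)$ with the space of absolutely continuous $w : [0,\infty) \to \mathbb{R}$ satisfying $w(0)=0$ and $w' \in L^p(0,\infty)$, preserving $\|\nabla u\|_p^p = \int_0^\infty |w'|^p\,dt$ and yielding, by the same bookkeeping as in that proof,
\begin{align*}
\int_{B_R^N} |u|^q V_p(|x|)\,dx = \frac{\w_p}{p}\, \alpha_p^{-q(p-1)/p} \int_0^\infty |w(t)|^q e^{-t}\,dt.
\end{align*}

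To derive the quantitative bound, I will use $w(0)=0$ and H\"older's inequality to obtain $|w(t)| \le t^{1/p'}\|w'\|_p$, so that
\begin{align*}
\int_0^\infty |w(t)|^q e^{-t}\,dt \le \|\nabla u\|_p^q \int_0^\infty t^{q/p'} e^{-t}\,dt = \|\nabla u\|_p^q\, \Gamma\!\left(\left(1-\tfrac{1}{p}\right)q + 1\right).
\end{align*}
Substituting $\alpha_p = p\,\w_p^{1/(p-1)}$, absorbing $\|a\|_\infty$ into the integrand, taking $q$-th roots, and collecting exponents of $\w_p$ and $p$ produces exactly the constant stated in the lemma.

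For compactness, I take a bounded sequence $\{u_n\} \subset \dot{W}^{1,p}_{0,\text{rad}}(B_R^N)$ and let $\{w_n\}$ be the corresponding transforms, with $\|w_n'\|_p \le M$. Passing to a subsequence, $w_n' \rightharpoonup \tilde w$ weakly in $L^p(0,\infty)$; set $w(t) = \int_0^t \tilde w(s)\,ds$. On each interval $[0,T]$ the sequence is bounded in $W^{1,p}(0,T)$, so by Rellich--Kondrachov it converges uniformly to $w$ along a further subsequence. The tail is uniformly controlled since
\begin{align*}
\int_T^\infty |w_n(t)|^q e^{-t}\,dt \le M^q \int_T^\infty t^{q/p'} e^{-t}\,dt \xrightarrow[T\to\infty]{} 0,
\end{align*}
and a standard diagonal argument produces convergence in $L^q((0,\infty); e^{-t}\,dt)$. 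This transfers back to convergence in $L^q(B_R^N; a(x)V_p(|x|)\,dx)$ via the identity above together with $a \le \|a\|_\infty$.

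The only real technical point is verifying that the Moser transformation is a Banach-space isomorphism on the full Sobolev space, including the case $R=\infty$, where the condition $w(0)=0$ is compatible with the equivalence-class convention because radial elements of $\dot{W}^{1,p}(\mathbb{R}^N)$ with $p<N$ decay at infinity by the radial Hardy--Sobolev inequality. Granted that identification, both conclusions of the lemma follow from the computations sketched above.
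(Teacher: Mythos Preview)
Your proof is correct. For the quantitative estimate the two arguments coincide: the bound $|w(t)|\le t^{1/p'}\|w'\|_p$ is precisely the radial lemma after the Moser change of variables, and the paper carries out the same substitution to produce the Gamma integral. The compactness argument differs in organization. The paper stays in $B_R^N$ and splits the domain into a small ball $B_\delta^N$ (controlled pointwise by the radial lemma together with the smallness of $\int_{B_\delta^N}|x|^{-(N-p)q/p}V_p(|x|)\,dx$), an annulus $B_R^N\setminus B_\delta^N$ (where one-dimensional Sobolev compactness applies because the Jacobian $r^{N-1}$ is bounded away from zero and infinity), and, when $R=\infty$, an additional tail region $\mathbb{R}^N\setminus B_T^N$, handled separately for $q\le p^*$ via the Sobolev inequality and for $q>p^*$ via the radial lemma. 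You instead transform to the half-line first and then split $[0,\infty)=[0,T]\cup[T,\infty)$, using Rellich--Kondrachov on the compact piece and the uniform tail bound $\int_T^\infty t^{q/p'}e^{-t}\,dt$ on the rest. Under the change of variables this is the same decomposition (your large-$t$ region is the paper's small-$r$ region), but your packaging handles all $R\in(0,\infty]$ and all $q\ge 1$ in one stroke, with no case distinctions, at the cost of having to justify that the Moser map extends to an isometric isomorphism on the whole radial Sobolev space --- a point you correctly flag and which is indeed routine given the norm identity already computed in the paper.
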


\begin{proof}{\it (Lemma \ref{Lem cpt})} First, we consider the case where $R<\infty$. Let 
\begin{align*}
    u_m \rightharpoonup u \,\,{\rm in} \,\, \dot{W}_{0, {\rm rad}}^{1,p}(B_R^N).
\end{align*}
Note that the embedding $\dot{W}_{0, {\rm rad}}^{1,p}(B_R^N \setminus \overline{B_\delta^N}) \hookrightarrow L^q (B_R^N \setminus\overline{ B_\delta^N} )$ is compact for any $\delta, R >0$ because of the boundedness of the Jacobian $r^{N-1}$ in $(\delta, R)$ and the compactness of the one dimensional Sobolev space $\dot{W}^{1,p}_0(\delta, R) \hookrightarrow L^q(\delta , R)$. By the Radial Lemma:
\begin{align}\label{Radial Lemma}
    | \,u(|x|)\, | \le \( \frac{p-1}{N-p} \)^{\frac{p-1}{p}} \w_{N}^{-\frac{1}{p}}  \| \nabla u\|_{L^p(B^N_R  \setminus B_{|x|}^N)} \left| \, |x|^{-\frac{N-p}{p-1}} -  R^{-\frac{N-p}{p-1}} \,\right|^{\frac{p-1}{p}}\quad ({\rm a.e.} \,\, x \in B_R^N)
\end{align}
we have
\begin{align*}
     &\int_{B_R^N} |u_m -u|^q  a(x) V_p (|x|) \,dx \\
     &\le  C \| \nabla (u_m -u) \|_p^q \int_{B_\delta^N} |x|^{-\frac{N-p}{p} q} V_p(|x|) \,dx + \( \max_{x \in B_R^N \setminus B_\delta^N} a(x) V_p (|x|) \) \int_{B_R^N \setminus B_\delta^N} |u_m-u|^q \,dx\\
     &=: D_1(\delta) + D_2 (m, \delta).
\end{align*}
Since for fixed $\delta>0$ the constant $D_2(m, \delta) \to 0$ as $m \to 0$ and $D_1(\delta) \to 0$ as $\delta \to 0$, we have $u_m \rightharpoonup u \,\,{\rm in} \,\, L^q(B^N_R; a(x) V_p(|x|)\,dx)$.
Next, we consider the case where $R=\infty$. If $q \le \frac{Np}{N-p}$, then we can estimate by using the Sobolev inequality as follows.
\begin{align*}
     &\int_{\re^N} |u_m -u|^q a(x) V_p (|x|) \,dx \\
     &\le D_1(\delta) + D_2 (m, \delta, T) + \( \max_{x \in \re^N \setminus B_T^N} a(x) V_p(|x|) \) \( \int_{\re^N} |\nabla (u_m- u)|^p \,dx \)^{\frac{q}{p}}\\
     &\le D_1(\delta) + D_2 (m, \delta, T) + D_3 (T) \to 0 \,\,(m, T \to \infty, \,\delta \to 0)
\end{align*}
If $q > \frac{Np}{N-p}$, then we can estimate by using the Radial lemma as follows.
\begin{align*}
     &\int_{\re^N} |u_m -u|^q  a(x) V_p (|x|) \,dx \\
     &\le D_1(\delta) + D_2 (m, \delta, T) + C  \| \nabla (u_m -u) \|_p^q \int_{\re^N \setminus B_T^N} |x|^{-\frac{N-p}{p} q - (N-1)p'} \,dx\\
     &\le D_1(\delta) + D_2 (m, \delta, T) + D_3 (T) \to 0 \,\,(m, T \to \infty, \,\delta \to 0)
\end{align*}
Therefore we have we have $u_m \rightharpoonup u \,\,{\rm in} \,\, L^q(B^N_R; a(x) V_p(|x|)\,dx)$ for any $R \in (0, \infty]$.

Finally we prove the estimate. By the boundedness of $a$, the radial lemma (\ref{Radial Lemma}) and changing variables with
\begin{align*}
    t=\frac{p-1}{N-p}p\left(\frac{\omega_p}{\omega_N}\right)^{\frac{1}{p-1}} \( r^{-\frac{N-p}{p-1}}- R^{-\frac{N-p}{p-1}} \)
\end{align*}
the following estimates hold.
\begin{align*}
&\int_{B^N_R}|u(x)|^qa(x)V_p(|x|)dx\\
&\le \|a\|_{\infty}\left(\frac{p-1}{N-p}\right)^{\frac{(p-1)q}{p}}\omega_N^{-\frac{q}{p}}\left(\frac{\omega_p}{\omega_N}\right)^{p'}\|\nabla u\|_p^q\\
&\,\,\times \int_{B^N_R}|x|^{-(N-1)p'}\left||x|^{-\frac{N-p}{p-1}}-R^{-\frac{N-p}{p-1}}\right|^{\frac{(p-1)q}{p}}
\exp\left[-\frac{p-1}{N-p}p\left(\frac{\omega_p}{\omega_N}\right)^{\frac{1}{p-1}}\left(|x|^{-\frac{N-p}{p-1}}-R^{-\frac{N-p}{p-1}}\right)\right]dx\\
&= \|a\|_{\infty}\left(\frac{p-1}{N-p}\right)^{\frac{q(p-1)}{p}}\omega_N^{1-\frac{q}{p}}\left(\frac{\omega_p}{\omega_N}\right)^{p'}\|\nabla u\|_p^q\\
&\,\,\times \int_{0}^{R}r^{-\frac{N-1}{p-1}}\left( r^{-\frac{N-p}{p-1}}-R^{-\frac{N-p}{p-1}}\right)^{\frac{(p-1)q}{p}}
\exp\left[-\frac{p-1}{N-p}p\left(\frac{\omega_p}{\omega_N}\right)^{\frac{1}{p-1}}\left(r^{-\frac{N-p}{p-1}}-R^{-\frac{N-p}{p-1}}\right)\right]dr\\
&= \|a\|_{\infty}\omega_p^{1-\frac{q}{p}}p^{\frac{(-p+1)q}{p}-1} \|\nabla u\|_p^q\int_0^{\infty}t^{\frac{(p-1)q}{p}}e^{-t}dt.
\end{align*}
\end{proof}


\begin{proof}{\it (Theorem \ref{Thm WTM max})}
If $\alpha < \alpha_p$, then we can show easily the existence of a maximizer by the compactness argument. In fact, let $\{ u_m \} \subset \dot{W}_{0, {\rm rad}}^{1,p} (B_R^N)$ be a maximizing sequence of $T_{p, \alpha}^{\rm rad}$. Namely,
\begin{align*}
    \int_{B_R^N} |\nabla u_m |^p \,dx \le 1, \,\, \int_{B_R^N} e^{ \alpha |u_m|^{p'} } V_p(|x|) \,dx \to T_{p, \alpha}^{\rm rad}.
\end{align*}
Since $\{ u_m \}$ is bounded in $\dot{W}_{0, {\rm rad}}^{1,p}(B_R^N)$, there exists $u_* \in \dot{W}_{0, {\rm rad}}^{1,p}(B_R^N)$ such that
\begin{align*}
    u_m \rightharpoonup u_* \,\,{\rm in} \,\, \dot{W}_{0, {\rm rad}}^{1,p}(B_R^N), \,\,
    \int_{B_R^N} |\nabla u_* |^p \,dx \le \liminf_{m \to \infty} \int_{B_R^N} |\nabla u_m |^p \,dx \le 1.
\end{align*}
Let $q>2$ satisfy $\frac{2}{q}+ \frac{\alpha}{\alpha_p} =1$. 
By Lemma \ref{Lem cpt}, we have
\begin{align*}
    u_m \to u_* \,\,{\rm in} \,\,L^q(B_R^N ; V_p(|x|) \,dx).
\end{align*}
Therefore we have
\begin{align*}
    &\left| \int_{B_R^N} \( e^{ \alpha |u_m|^{p'} } - e^{ \alpha |u_*|^{p'} }  \) V_p(|x|) \,dx \right| \\
    &\le \alpha p' \int_{B_R^N} \( |u_m|^{p'-1} e^{ \alpha |u_m|^{p'} } + |u_*|^{p'-1} e^{ \alpha |u_*|^{p'} }  \) |u_m-u_*| V_p(|x|) \,dx\\
    &\le \alpha p' \( T_{p, \alpha_p}^{\rm rad} \)^{\frac{\alpha}{\alpha_p}} C \( \int_{B_R^N} |u_m -u_*|^q V_p(|x|) \,dx \)^{\frac{1}{q}} 
    \to 0
\end{align*}
which implies that $u_*$ is a maximizer of $T_{p, \alpha}^{\rm rad}$. 


Let $\alpha =\alpha_p$. In this case, the maximization problem $T_{p,\alpha_p}^{\rm rad}$ is equivalent to the one-dimensional maximization problem $M_p$ in (\ref{1dim problem}). 
Therefore it is enough to show the existence of a maximizer of $M_p$. In the case $p \in \N$, the existence of a maximizer of $M_p$ was shown by \cite{CC} and the case $p \not\in \N$ was shown by \cite{HL}. 
\end{proof}

\begin{remark}\label{Rem concentration limit}
    In our problem $T_{p, \alpha_p}^{\rm rad}$, the concentration level is $1+ \exp \( \int_1^\infty \frac{s^{p-1} -1}{s^p (s-1)} \,ds \)$.
    This coincides with {\it the Carleson-Chang limit}: $1+ e^{1+ \frac{1}{2}+ \cdots \frac{1}{N-1}}$ in the case $p =N \in \N$, and this is a $W^{1,p}$-approximation of it, see \cite{HL}. 
\end{remark}


Also, we can obtain the existence result of the following maximization problem with subcritical growth in a similar way to  the case $\alpha < \alpha_p$ in the proof of Theorem \ref{Thm WTM max}. We omit the proof. 

\begin{prop}\label{prop sub}(Subcritical growth)
    Let $\gamma < p', \alpha >0$ and $V_{p, \beta}(|x|)$ be given in Corollary \ref{Cor gene}. Then 
    \begin{align*}
    \sup \left\{ \int_{B_R^N} e^{\alpha |u|^\gamma} V_{p, \beta} (|x|) \,dx \,\,\middle| \,\,  u \in \dot{W}_{0, {\rm rad}}^{1,p} (B_R^N), \,\,\| \nabla u\|_p \le 1 \right\}
    \end{align*}
    is attained. 
\end{prop}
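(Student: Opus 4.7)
The plan is to mimic the argument for the case $\alpha<\alpha_p$ in the proof of Theorem \ref{Thm WTM max}, adjusting only the integrability estimate so as to exploit the fact that $\gamma<p'$ is strictly subcritical. Let $\{u_m\}\subset \dot{W}_{0,{\rm rad}}^{1,p}(B_R^N)$ be a maximizing sequence with $\|\nabla u_m\|_p\le 1$. By reflexivity, pass to a subsequence so that $u_m\rightharpoonup u_\ast$ weakly in $\dot{W}_{0,{\rm rad}}^{1,p}(B_R^N)$, and by weak lower semicontinuity of the norm, $\|\nabla u_\ast\|_p\le 1$.

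Next, I would invoke Lemma \ref{Lem cpt} with $V_{p,\beta}$ in place of $V_p$: its proof applies verbatim, since replacing $V_p$ by $V_{p,\beta}$ only alters a positive constant in the exponent and in the weight. Therefore the embedding $\dot{W}_{0,{\rm rad}}^{1,p}(B_R^N)\hookrightarrow L^q(B_R^N;V_{p,\beta}(|x|)\,dx)$ is compact for every $q\in[1,\infty)$, so $u_m\to u_\ast$ strongly in all these weighted $L^q$ spaces, and along a further subsequence $u_m\to u_\ast$ almost everywhere on $B_R^N$.

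The key step is to establish equi-integrability of the family $\{e^{\alpha|u_m|^{\gamma}}V_{p,\beta}(|\cdot|)\}_m$ on $B_R^N$. Since $\gamma<p'$, for any fixed $r>1$ there exists a constant $C_r>0$ such that
\begin{equation*}
r\,\alpha\, t^{\gamma}\le \alpha_{p,\beta}\, t^{p'}+C_r \qquad \text{for all } t\ge 0.
\end{equation*}
Combining this pointwise inequality with Corollary \ref{Cor gene} applied at the critical exponent $\alpha_{p,\beta}$ gives
\begin{equation*}
\int_{B_R^N}\bigl(e^{\alpha|u_m|^{\gamma}}\bigr)^{r}V_{p,\beta}(|x|)\,dx\;\le\;e^{C_r}\int_{B_R^N}e^{\alpha_{p,\beta}|u_m|^{p'}}V_{p,\beta}(|x|)\,dx\;\le\;e^{C_r}\,T^{\rm rad}_{p,\alpha_{p,\beta},\beta}\;<\;\infty,
\end{equation*}
uniformly in $m$. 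Since $V_{p,\beta}(|x|)\,dx$ is a finite measure on $B_R^N$ (the super-exponential decay of $V_{p,\beta}$ at the origin tames the factor $|x|^{-(N-1)p'}$, and when $R=\infty$ the exponential part tends to one at infinity, leaving an integrable tail $\sim|x|^{-(N-1)/(p-1)}$ thanks to $p<N$), this uniform bound in $L^r(V_{p,\beta}\,dx)$ with $r>1$ forces equi-integrability.

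Finally, Vitali's convergence theorem together with the a.e.\ convergence from the compactness step yields
\begin{equation*}
\int_{B_R^N} e^{\alpha|u_m|^{\gamma}}V_{p,\beta}(|x|)\,dx\;\longrightarrow\;\int_{B_R^N} e^{\alpha|u_\ast|^{\gamma}}V_{p,\beta}(|x|)\,dx,
\end{equation*}
so $u_\ast$ attains the supremum. I expect that the only step requiring a small amount of care is verifying the uniform $L^r$-bound above (and, in the case $R=\infty$, the integrability of $V_{p,\beta}$ at infinity); everything else is a direct transplantation of the $\alpha<\alpha_p$ argument in the proof of Theorem \ref{Thm WTM max}.
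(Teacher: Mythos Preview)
Your proposal is correct and complete. The paper itself omits the proof, saying only that it proceeds ``in a similar way to the case $\alpha<\alpha_p$ in the proof of Theorem~\ref{Thm WTM max}''; your argument follows exactly that overall scheme (maximizing sequence, weak limit, compactness via Lemma~\ref{Lem cpt}), with one technical variation worth noting. In the paper's $\alpha<\alpha_p$ argument the passage to the limit in the functional is done by the mean-value inequality $|e^{a}-e^{b}|\le(\cdots)|a-b|$ followed by a H\"older splitting with exponents chosen so that the exponential factor lands back in the Trudinger--Moser supremum; you instead exploit $\gamma<p'$ to get a uniform $L^r(V_{p,\beta}\,dx)$ bound for some $r>1$ via the elementary inequality $r\alpha t^{\gamma}\le \alpha_{p,\beta}t^{p'}+C_r$, and then invoke Vitali. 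Both routes are standard; your Vitali argument is arguably cleaner here because it makes transparent why strict subcriticality suffices without having to tune H\"older exponents. Your remark that Lemma~\ref{Lem cpt} carries over to $V_{p,\beta}$ is also correct: one cannot simply write $V_{p,\beta}=a\,V_p$ with $a$ bounded (for $\beta>0$ the ratio blows up at the origin), but, as you say, the proof of the lemma uses only the qualitative decay of the weight at $0$ and at $\infty$, which $V_{p,\beta}$ shares.
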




As a direct application of our inequalities, we can show the existence of a weak solution of the Euler-Lagrange equation as follows. We use the notation $\lap_p u= {\rm div} (|\nabla u|^{p-2} \nabla u)$.

\begin{cor}\label{EL eq}(Euler-Lagrange equation)
    Assume that 
    \begin{align*}
        \gamma < p' \,(\text{Subcritical growth}) \quad \text{and}\quad \alpha >0
    \end{align*}
    or 
    \begin{align*}
        \gamma = p' \,(\text{Critical growth}) \quad \text{and} \quad 0< \alpha \le \alpha_{p, \beta}.
    \end{align*}
    Then 
    \begin{align*}
        (EL)\quad \begin{cases}
            &-\lap_p u =  \frac{u^{\gamma -1} e^{\alpha u^\gamma} V_{p,\beta} (|x|)}{\int_{B_R^N} u^\gamma e^{\alpha u^\gamma} V_{p, \beta} (|x|) \,dx} \quad {\rm in} \quad B_R^N,\\
            &u\ge 0 \quad {\rm in} \,\, B_R^N, \quad  u=0 \quad {\rm on} \,\, \pd B_R^N
        \end{cases}
    \end{align*}
    admits a nontrivial radial weak solution, namely, $u \in \dot{W}_{0, \rm rad}^{1,p} (B_R^N) \setminus \{ 0\}$ satisfies
\begin{align*}
    \int_{B_R^N} |\nabla u|^{p-2} \nabla u \cdot \nabla \varphi \,dx 
    =  \frac{\int_{B_R^N} u^{\gamma -1} e^{\alpha u^\gamma} V_{p,\beta} (|x|) \,\varphi \,dx }{\int_{B_R^N} u^\gamma e^{\alpha u^\gamma} V_{p, \beta} (|x|) \,dx} 
\end{align*}
    for any $\varphi \in \dot{W}_{0}^{1,p} (B_R^N)$.
\end{cor}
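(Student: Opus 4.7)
The plan is to obtain a nontrivial weak solution of $(EL)$ as a constrained maximizer of the weighted Trudinger--Moser functional and then upgrade the Lagrange relation to all of $\dot{W}_0^{1,p}(B_R^N)$ via Palais' principle of symmetric criticality. In the critical case ($\gamma=p'$, $0<\alpha\le\alpha_{p,\beta}$) let $u_0$ be a maximizer of $T^{\rm rad}_{p,\alpha,\beta}$ supplied by Theorem \ref{Thm WTM max} together with Corollary \ref{Cor gene}; in the subcritical case ($\gamma<p'$, $\alpha>0$) let $u_0$ be the maximizer provided by Proposition \ref{prop sub}. Replacing $u_0$ by $|u_0|$ preserves both the constraint and the functional value, so I may assume $u_0\ge 0$. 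Since scaling any admissible $u$ with $0<\|\nabla u\|_p<1$ up to unit norm strictly increases the functional, every maximizer satisfies $\|\nabla u_0\|_p=1$; and nontriviality is immediate since any non-zero competitor yields a value strictly larger than $\int_{B_R^N}V_{p,\beta}\,dx$.

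I would then apply the Lagrange multiplier rule on $\dot{W}_{0,{\rm rad}}^{1,p}(B_R^N)$ to the pair
\[
\Phi(u)=\int_{B_R^N} e^{\alpha |u|^{\gamma}}V_{p,\beta}(|x|)\,dx,\qquad \Psi(u)=\tfrac{1}{p}\|\nabla u\|_p^p.
\]
Standard computations yield the Gateaux derivatives
\[
\Phi'(u_0)\varphi=\alpha\gamma\int_{B_R^N}u_0^{\gamma-1}e^{\alpha u_0^{\gamma}}V_{p,\beta}(|x|)\,\varphi\,dx,\quad \Psi'(u_0)\varphi=\int_{B_R^N}|\nabla u_0|^{p-2}\nabla u_0\cdot\nabla\varphi\,dx,
\]
and since $\Psi'(u_0)u_0=1\ne 0$ the Lagrange multiplier theorem produces $\mu\in\re$ with $\Phi'(u_0)=\mu\Psi'(u_0)$ on $\dot{W}_{0,{\rm rad}}^{1,p}(B_R^N)$. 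Testing with $\varphi=u_0$ identifies
\[
\mu=\alpha\gamma\int_{B_R^N}u_0^{\gamma}e^{\alpha u_0^{\gamma}}V_{p,\beta}(|x|)\,dx>0,
\]
and dividing the Lagrange identity by $\mu$ reproduces precisely the weak form of $(EL)$ for every radial test function. Palais' principle of symmetric criticality then extends this identity to all $\varphi\in\dot{W}_0^{1,p}(B_R^N)$, since $I=\Phi-\mu\Psi$ is $O(N)$-invariant on $\dot{W}_0^{1,p}(B_R^N)$ (the weight $V_{p,\beta}$ is radial).

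The main obstacle I anticipate is the $C^1$ regularity of $\Phi$ in the borderline critical case $\gamma=p'$, $\alpha=\alpha_{p,\beta}$, where the exponential is only borderline integrable against $V_{p,\beta}$. The remedy is to invoke Theorem \ref{Thm WTM bdd}(I), which guarantees $\int_{B_R^N}e^{\alpha'|u_0|^{p'}}V_{p,\beta}\,dx<\infty$ for the fixed $u_0$ and \emph{any} $\alpha'>\alpha$, choose $r>1$ with $r\alpha<\alpha'$, and apply H\"older (splitting $V_{p,\beta}=V_{p,\beta}^{1/r}\cdot V_{p,\beta}^{1/r'}$) to bound
\[
\int_{B_R^N}u_0^{\gamma-1}e^{\alpha u_0^{\gamma}}V_{p,\beta}|\varphi|\,dx\le\Bigl(\int_{B_R^N}u_0^{(\gamma-1)r}e^{r\alpha u_0^{\gamma}}V_{p,\beta}\,dx\Bigr)^{1/r}\Bigl(\int_{B_R^N}|\varphi|^{r'}V_{p,\beta}\,dx\Bigr)^{1/r'},
\]
the second factor being controlled by Lemma \ref{Lem cpt}. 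The same estimate, applied with $u_0$ replaced by $u_0+th$ for small $t$, produces the dominated-convergence bound needed to pass to the limit in the difference quotients; this part of the argument is standard in the Trudinger--Moser literature but has to be carried through carefully with our specific weight $V_{p,\beta}$.
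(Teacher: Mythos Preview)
Your overall strategy coincides with the paper's: obtain a nonnegative maximizer $u_0$ from Proposition~\ref{prop sub} or Corollary~\ref{Cor gene}, use scaling to force $\|\nabla u_0\|_p=1$, derive the Lagrange relation on $\dot W^{1,p}_{0,\mathrm{rad}}(B_R^N)$, identify the multiplier by testing with $u_0$, and then extend the weak form to all test functions in $\dot W^{1,p}_0(B_R^N)$. The divergence is in this last step, and there your appeal to Palais has a real gap.

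Palais' principle of symmetric criticality requires the functional $I=\Phi-\mu\Psi$ to be $C^1$ on the \emph{full} space $\dot W^{1,p}_0(B_R^N)$. But $\Phi(u)=\int_{B_R^N}e^{\alpha|u|^\gamma}V_{p,\beta}\,dx$ is not even finite for generic non-radial $u$: since $p<N$, a function in $\dot W^{1,p}_0(B_R^N)$ may carry a singularity like $|x-x_0|^{-a}$ with $0<a<(N-p)/p$ at any $x_0\ne 0$, and because $V_{p,\beta}(|x_0|)>0$ the integral $\int e^{\alpha|u|^\gamma}V_{p,\beta}\,dx$ diverges near $x_0$. The finiteness in Theorem~\ref{Thm WTM bdd}(I) is a strictly radial statement---indeed the whole thrust of \S\ref{S Intro} (cf.\ Tables~\ref{subcritical table non-rad} and~\ref{subcritical table}) is that these super-Sobolev functionals become finite only after restriction to $\dot W^{1,p}_{0,\mathrm{rad}}$. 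So the hypothesis of Palais fails outright, and your assertion that ``$I$ is $O(N)$-invariant on $\dot W^{1,p}_0(B_R^N)$'' presupposes a functional that is not defined there.

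The paper bypasses this with an explicit spherical-averaging argument: for arbitrary $\varphi\in\dot W^{1,p}_0(B_R^N)$ one sets $\tilde\varphi(r)=\omega_N^{-1}\int_{\mathbb{S}^{N-1}}\varphi(r\omega)\,dS_\omega\in\dot W^{1,p}_{0,\mathrm{rad}}(B_R^N)$, applies the already-proved radial identity to $\tilde\varphi$, and then checks via Fubini in polar coordinates (using only that $u_0$ and $V_{p,\beta}$ are radial) that both sides for $\tilde\varphi$ coincide with those for $\varphi$. This is morally Palais made concrete, but it needs only that the \emph{linear} functional $\varphi\mapsto\int u_0^{\gamma-1}e^{\alpha u_0^\gamma}V_{p,\beta}\,\varphi\,dx$ extends boundedly to $\dot W^{1,p}_0(B_R^N)$---which follows from your own H\"older estimate together with the boundedness of $V_{p,\beta}$ and the Sobolev embedding (Lemma~\ref{Lem cpt} itself is radial-only and cannot be invoked for general $\varphi$). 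Replacing the Palais step by this averaging computation closes the gap.
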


\begin{proof}{\it (Corollary \ref{EL eq})}
By Proposition \ref{prop sub} and Corollary \ref{Cor gene}, we have a nonnegative maximizer $u \in \dot{W}_{0, \rm rad}^{1,p} (B_R^N) \setminus \{ 0\}$ with $\| \nabla u \|_p \le 1$. Then $u$ satisfies
\begin{align}\label{weak form}
    \int_{B_R^N} |\nabla u|^{p-2} \nabla u \cdot \nabla \varphi \,dx 
    =  \lambda \int_{B_R^N} u^{\gamma -1} e^{\alpha u^\gamma} V_{p,\beta} (|x|) \,\varphi \,dx  
\end{align}
for any $\varphi \in \dot{W}_{0, \rm rad}^{1,p} (B_R^N)$ and for some $\lambda \in \re$.  
First, we see that $\| \nabla u\|_p =1$. In fact, if $\| \nabla u \|_p < 1$, we set $v=u/ \|\nabla u\|_p$ and obtain 
\begin{align*}
    T= \int_{B_R^N} e^{\alpha |u|^\gamma } V_{p, \beta} (|x|) \,dx
    = \int_{B_R^N} e^{\alpha \| \nabla u\|_p^\gamma  |v|^\gamma } V_{p, \beta} (|x|) \,dx
    < \int_{B_R^N} e^{\alpha |v|^\gamma } V_{p, \beta} (|x|) \,dx \le T.
\end{align*}
This is a contradiction. Thus, $\| \nabla u \|_p =1$ which implies $\dis{\la = \( \int_{B_R^N} u^{\gamma} e^{\alpha u^\gamma} V_{p,\beta} (|x|) \,dx \)^{-1}}$. Next, we show that (\ref{weak form}) holds true for any $\varphi \in \dot{W}_{0}^{1,p} (B_R^N)$. We use the polar coordinate $x = r\w \,(r=|x|, \w \in \mathbb{S}^{N-1})$. For $\varphi \in \dot{W}_{0}^{1,p} (B_R^N)$, consider the following radial function.
\begin{align*}
    \tilde{\varphi} (r) = \frac{1}{\w_N} \int_{\mathbb{S}^{N-1}} \varphi (r \w) \,dS_{\w} \quad (0 \le r < R)
\end{align*}
Then we have
\begin{align}\label{weak form radial}
    \int_{B_R^N} |\nabla u|^{p-2} \nabla u \cdot \nabla \tilde{\varphi} \,dx 
    =  \lambda \int_{B_R^N} u^{\gamma -1} e^{\alpha u^\gamma} V_{p,\beta} (|x|) \,\tilde{\varphi} \,dx  
\end{align}
Since $|\nabla u(x)|=|u'(r)|$ and $\dis{\nabla u(x) \cdot \nabla \varphi (x) = u'(r) \, \frac{\pd \varphi}{\pd r} (r\w)}$, we have
\begin{align*}
    ({\rm L.H.S. \,of}\, (\ref{weak form radial}))
    &= \w_N \int_0^R |u'(r)|^{p-2} u'(r) \,\tilde{\varphi}'(r) r^{N-1} \,dr \\
    &= \int_0^R |u'(r)|^{p-2} u'(r) \( \int_{\mathbb{S}^{N-1}} \frac{\pd \varphi}{\pd r} (r\w) \,dS_{\w} \) r^{N-1} \,dr\\
    &= \int_{B_R^N} |\nabla u|^{p-2} \nabla u \cdot \nabla \varphi \,dx,\\
    ({\rm R.H.S. \,of}\, (\ref{weak form radial}))
    &= \la \w_N \int_0^R u(r)^{\gamma -1} e^{\alpha u(r)^\gamma} V_{p,\beta} (r) \,\tilde{\varphi}(r) r^{N-1}\,dr \\
    &= \la  \int_0^R u(r)^{\gamma -1} e^{\alpha u(r)^\gamma} V_{p,\beta} (r)  \( \int_{\mathbb{S}^{N-1}} \varphi  (r\w) \,dS_{\w} \) r^{N-1}\,dr \\
    &=  \lambda \int_{B_R^N} u^{\gamma -1} e^{\alpha u^\gamma} V_{p,\beta} (|x|) \, \varphi \,dx.
\end{align*}
Therefore, we see that the weak form (\ref{weak form}) holds true for any $\varphi \in \dot{W}_{0}^{1,p} (B_R^N)$.
\end{proof}

We will discuss the existence of a weak solution of elliptic equations with general nonlinearity in \S \ref{S elliptic}. 

%
%
\section{Relation between the weighted TM and the original TM inequalities: Proof of Corollary \ref{Cor gene}}\label{S relation}

In Remark \ref{Rem approximation} and Remark \ref{Rem concentration limit}, we see that our inequality is a $W^{1,p}$-approximation of the original Trudinger-Moser inequality including the best exponent $\alpha_p$ and the concentration limit $1+ \exp \( \int_1^\infty \frac{s^{p-1} -1}{s^p (s-1)} \,ds \)$. 
In this section, we give an equivalence between our inequality with $p \in \N$ and the original Trudinger-Moser inequality via the harmonic transplantation by \cite{ST, ST(HT)}: 
Let $m, N \in \N$ and $1< p= N < m$. 
Consider the following harmonic transplantation for radial functions $u, v$.
\begin{align}\label{HT p in N}
    u(|x|) = v(|y|), \,\,\text{where}\,\,  \frac{p-1}{m-p} \w_m^{-\frac{1}{p-1}} \( |x|^{-\frac{m-p}{p-1}} - R^{-\frac{m-p}{p-1}} \) = \w_N^{-\frac{1}{N-1}} \log \frac{1}{|y|}
\end{align}
Then we have the equivalence between two norms of the subcritical Sobolev space $\dot{W}_{0, {\rm rad}}^{1,p} (B_R^m)$ and the critical Sobolev space $\dot{W}_{0, {\rm rad}}^{1,N} (B_1^N)$ as follows.
\begin{align*}
    \| \nabla u \|_{L^p(B_R^m)} = \| \nabla v \|_{L^N (B_1^N)}
\end{align*}
Also, we have the equivalence between our functional and the original Trudinger-Moser functional as follows.
\begin{align*}
    \int_{B_R^m} \exp (\alpha |u|^{p'}) V_p (|x|) \,dx = \int_{B_1^N} \exp \( \alpha |v|^{N'} \) \,dy 
\end{align*}
Therefore, we see that the special case $p \in \N$ of our maximization problem $T_{p,\alpha}^{\rm rad}$ in $B_R^{m}$ 
is equivalent to the original Trudinger-Moser maximization problem:
\begin{align*}
    \sup \left\{ \,\int_{B_1^N} \exp \( \alpha |v|^{N'} \) \,dy \,\,\middle| \,\, v \in 
 \dot{W}_{0, {\rm rad}}^{1,N} (B_1^N),\,\|\nabla v \|_{L^N (B_1^N)} \le 1 \, \right\}
\end{align*}
via the harmonic transplantation (\ref{HT p in N}).

\begin{proof}{\it (Corollary \ref{Cor gene})}
    Consider the following transplantation for radial functions $u, v$.
\begin{align}\label{HT p to p}
    u(|x|) = \( \frac{p}{p-\beta} \)^{\frac{p-1}{p}} v(|y|), \,\,\text{where}\,\,  \beta  \( |x|^{-\frac{N-p}{p-1}} - R^{-\frac{N-p}{p-1}} \) = p  \( |y|^{-\frac{N-p}{p-1}} - R^{-\frac{N-p}{p-1}} \)
\end{align}
Then we have
\begin{align*}
     \| \nabla u \|_{L^p(B_R^N)} &= \| \nabla v \|_{L^p (B_R^N)},\\
      \int_{B_R^N} \exp (\alpha |u|^{p'}) V_{p, \beta} (|x|) \,dx &= \int_{B_R^N} \exp \( \alpha \frac{p}{p-\beta} |v|^{p'} \) V_p (|y|) \,dy.
\end{align*}
Therefore, we obtain Corollary \ref{Cor gene} from Theorem \ref{Thm WTM bdd} and Theorem \ref{Thm WTM max}. 
\end{proof}

\begin{remark}
    If we consider the composed transformation by two transformations (\ref{HT p in N}), (\ref{HT p to p}), we see that the generalized maximization problem $T_{p,\alpha, \beta}^{\rm rad}$ is also equivalent to the original Trudinger-Moser maximization problem. 
\end{remark}

 In the special case $p \in \N$, we see that Table \ref{subcritical table improved} $(p=N<m, m:$ dimension) is corresponding to Table \ref{critical table} $(p=N, N:$ dimension) each other via the harmonic transplantation (\ref{HT p in N}). Furthermore, in this case, we see that radial weak solutions of the following two elliptic equations are also equivalent each other via (\ref{HT p in N}).

 \begin{align}\label{eq v}
     -\lap_N v &= f(v) \,\,\text{in}\,\, B_1^N, \quad v|_{\pd B_1^N} =0\\
     \label{eq u}
     -\lap_p u &= f(u) V_p(|x|) \,\,\text{in}\,\, B_R^m, \quad v|_{\pd B_R^m} =0
 \end{align}

 \begin{prop}\label{Prop equivalence}
 Let $1<p=N < m$. 
     If $v \in \dot{W}_{0, \rm rad}^{1,N}(B_1^N)$ is a radial weak solution of (\ref{eq v}), then $u \in \dot{W}_{0, \rm rad}^{1,p}(B_R^m)$ which is given by (\ref{HT p in N}) is the radial weak solution of (\ref{eq u}), and vice versa. 
 \end{prop}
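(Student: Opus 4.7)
The plan is to translate each weak formulation into a one-dimensional weighted identity in radial coordinates and verify that the change of variables (\ref{HT p in N}) maps one into the other exactly. Writing $s = |x|$ and $r = |y|$, a radial $v \in \dot{W}^{1,N}_{0, \text{rad}}(B_1^N)$ is a weak solution of (\ref{eq v}) iff
\[ \w_N \int_0^1 |v'(r)|^{N-2} v'(r)\, \psi'(r)\, r^{N-1}\, dr = \w_N \int_0^1 f(v(r))\, \psi(r)\, r^{N-1}\, dr \]
for every radial $\psi \in C_c^\infty(B_1^N)$, and the analogous statement with $\w_m$, $s^{m-1}$, and the weight $V_p(s)$ on the right characterises radial weak solutions of (\ref{eq u}). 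By the symmetrisation step used in the proof of Corollary \ref{EL eq}, testing against arbitrary $\varphi \in \dot W^{1,p}_0$ is equivalent to testing against its radial average, so it suffices to establish equivalence of the two one-dimensional identities under the bijection $r = r(s)$.

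For the key computations, implicit differentiation of (\ref{HT p in N}), using crucially that $p = N$ (so $\tfrac{1}{p-1} = \tfrac{1}{N-1}$), yields
\[ r'(s) = r(s)\, (\w_N/\w_m)^{\frac{1}{N-1}}\, s^{-\frac{m-1}{N-1}} > 0, \]
so $s \in (0, R)$ maps diffeomorphically onto $r \in (0, 1)$ and radial test functions correspond bijectively through $\eta(s) = \psi(r(s))$. Setting $u(s) = v(r(s))$, the identities $u'(s) = v'(r)\, r'(s)$ and $\eta'(s) = \psi'(r)\, r'(s)$ give
\[ |u'(s)|^{p-2} u'(s)\, \eta'(s)\, s^{m-1}\, ds = |v'(r)|^{N-2} v'(r)\, \psi'(r)\, |r'(s)|^{N}\, s^{m-1}\, \tfrac{dr}{r'(s)}, \]
and substituting the explicit formula for $r'(s)$ makes the $s$-powers collapse to $(\w_N/\w_m)\, r^{N-1}\, dr$. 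The prefactor $\w_m$ in the $u$-weak form thus becomes $\w_N$, matching the LHS of the $v$-weak form.

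The main obstacle is the matching right-hand side, where the exponential prefactor of $V_p(s)$ must combine with $s^{m-1}\, ds / r'(s)$ to deliver $r^{N-1}\, dr$ up to the constant $\w_N/\w_m$. The key observation is that multiplying (\ref{HT p in N}) by $p\, \w_N^{1/(N-1)}$ with $p = N$ identifies the entire exponent inside $V_p(s)$ as $-N \log r$, so the exponential factor contributes exactly $r^N$. What remains is the collapse of $s^{-(m-1)N'} \cdot s^{m-1} \cdot s^{(m-1)/(N-1)}$ to a zero power; this holds precisely because $(m-1) N' = (m-1) + \tfrac{m-1}{N-1}$, an identity that again requires $p = N$. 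Once this is verified, both right-hand sides agree (modulo the $\w_N/\w_m$ that is absorbed into the prefactors), and since every step is reversible the correspondence (\ref{HT p in N}) sends radial weak solutions of one equation to radial weak solutions of the other.
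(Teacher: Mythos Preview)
Your proposal is correct and follows essentially the same approach as the paper: both reduce to the one-dimensional radial weak formulations and verify that the change of variables (\ref{HT p in N}) carries one into the other, with the extension to non-radial test functions handled by the averaging argument from Corollary~\ref{EL eq}. The only cosmetic differences are that the paper uses the opposite variable labels (its $r$ lives in $(0,R)$ and $s$ in $(0,1)$) and packages your two verifications---the exponential factor becoming $r^N$ and the collapse of the $s$-powers---into the single identity $\bigl(\tfrac{ds}{dr}\bigr)^p = V_p(r)$, from which both the left- and right-hand side transformations follow at once.
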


 \begin{proof}{\it ( Proposition \ref{Prop equivalence})}
For any $\varphi \in \dot{W}_{0, \rm rad}^{1,N}(B_1^N)$, $v$ satisfies
\begin{align}\label{weak form v}
    \int_{B_1^N} |\nabla v |^{N-2} \nabla v \cdot \nabla \varphi \,dy = \int_{B_1^N} f(v) \varphi \,dy.
\end{align}
Set $|x|=r, |y|=s$ for $x \in B_R^m$ and $y \in B_1^N$, and 
\begin{align*}
    u(r) = v(s), \,\,\phi(r)=\varphi(s), \,\,\text{where}\,\,  \frac{p-1}{m-p} \w_m^{-\frac{1}{p-1}} \( r^{-\frac{m-p}{p-1}} - R^{-\frac{m-p}{p-1}} \) = \w_N^{-\frac{1}{N-1}} \log \frac{1}{s}.
\end{align*}
Since 
\begin{align*}
    s \frac{dr}{ds} = r^{\frac{m-1}{p-1}} \( \frac{\w_m}{\w_N} \)^{\frac{1}{p-1}}\,\,\text{and}\,\, \( \frac{ds}{dr} \)^p = s^p r^{-(m-1)p'}  \( \frac{\w_N}{\w_m} \)^{p'} = V_p(r),
\end{align*}
we have
\begin{align*}
    (\text{L.H.S. of (\ref{weak form v})}) 
    &= \w_N \int_0^1 |v'(s)|^{N-2} v'(s) \varphi'(s) s^{N-1}\,ds  \\
    &= \w_N \int_0^R |u'(r)|^{p-2} u'(r) \phi'(r) \(s \frac{dr}{ds} \)^{p-1}\,dr\\
    &= \w_m \int_0^R |u'(r)|^{p-2} u'(r) \phi'(r) r^{m-1}\,dr
    = \int_{B_R^m} |\nabla u |^{p-2} \nabla u \cdot \nabla \phi \,dx,\\
     (\text{R.H.S. of (\ref{weak form v})}) 
    &= \w_N \int_0^1 f(v(s)) \varphi(s) s^{N-1}\,ds  \\
    &= \w_N \int_0^R f(u(r)) \phi(r) \(s \frac{dr}{ds} \)^{p-1} \( \frac{ds}{dr} \)^p \,dr\\
    &= \w_m \int_0^R f(u(r)) \phi(r) V_p(r) r^{m-1}\,dr
    = \int_{B_R^m}  f(u) V_p(|x|) \phi \,dx.
\end{align*}
 \end{proof}


Also, we can check that a radial classical solution of (\ref{eq v}) in $B_1^N \setminus \{ 0\}$ is equivalent to the radial classical solution of (\ref{eq u}) in $B_R^m \setminus \{ 0\}$. In fact, since
\begin{align*}
    \lap_{p} u = (p-1) \frac{|u'(r)|^{p-2}}{r} \( u''(r) r + \frac{m-1}{p-1} u'(r) \),
\end{align*}
we have
\begin{align*}
    \lap_{N} v &= (N-1) \frac{|v'(s)|^{N-2}}{s} \frac{d}{ds} \( v'(s) s \) \\
    &= (p-1) \frac{|u'(r)|^{p-2}}{s} \(\frac{dr}{ds} \)^{p-1} \frac{d}{dr} \(u'(r) r^{\frac{m-1}{p-1}} \(\frac{\w_m}{\w_N}\)^{\frac{1}{p-1}} \)\\
    &=  \(\frac{\w_m}{\w_N}\)^{\frac{1}{p-1}} (p-1)  \frac{|u'(r)|^{p-2}}{s^p}  r^{m-1} \frac{\w_m}{\w_N} \(u''(r) r^{\frac{m-1}{p-1}} + \frac{m-1}{p-1} u'(r) r^{\frac{m-1}{p-1} -1} \)\\
    &=  \(\frac{\w_m}{\w_N}\)^{p'} \frac{r^{(m-1)p'}}{s^p} (p-1) \frac{|u'(r)|^{p-2}}{r} \( u''(r) r + \frac{m-1}{p-1} u'(r) \)
    = V_p(r)^{-1} \lap_p u.
\end{align*}
Therefore, if $v \in C_{\rm rad}^2 (B_1^N \setminus \{ 0\})$ satisfies (\ref{eq v}) in $B_1^N \setminus \{ 0\}$ in the classical sense, then the transplanted function $u \in C_{\rm rad}^2 (B_R^m \setminus \{ 0\})$ by (\ref{HT p in N}) satisfies (\ref{eq u}) in $B_R^m \setminus \{ 0\}$ in the classical sense, and vice versa.

By using Proposition \ref{Prop equivalence}, we observe that the existence of a radial weak solution of (\ref{eq u}) follows from known results for (\ref{eq v}) in the case $p \in \N$ (Ref. \cite{A,FMR,SS} etc.). 
However, for real numbers $p$, we do not have any equivalence such as Proposition \ref{Prop equivalence}. 
Therefore, in \S \ref{S elliptic}, we study directly the existence of a radial weak solution of (\ref{eq u}) for real numbers $p$ via variational method without transformation. 

%
%
\section{Application of the weighted TM inequality to the elliptic equation}\label{S elliptic}
In this section, we apply the weighted TM inequality (Theorem \ref{Thm WTM bdd}) to the elliptic equation:
\begin{equation}\label{Elliptic eq}
   -\Delta_p u =  V(|x|) f(u) \,\,\text{in}\,\, B_R^N,\quad u|_{\pd B_R^N} =0,
\end{equation}
where $1< p < N, R\in (0,\infty]$ and $\lap_p u = {\rm div} (|\nabla u|^{p-2} \nabla u)$. 
Based on Theorem \ref{Thm WTM bdd}, we say that $f$ has {\it subcritical growth} at $\infty$ if for any $\alpha >0$
\begin{align}\label{subcritical cond}
    \lim_{|t| \to  \infty} |f(t)| \,e^{-\alpha |t|^{p'}} =0
\end{align}
and $f$ has {\it critical growth} at $\infty$ if there exists $\alpha_0 >0$ such that 
\begin{align}\label{critical cond}
    \lim_{|t| \to  \infty} |f(t)| \,e^{-\alpha |t|^{p'}} =0\, \,(\forall \alpha > \alpha_0), \quad 
    \lim_{|t| \to  \infty} |f(t)| \,e^{-\alpha |t|^{p'}} =\infty\,\,(\forall \alpha < \alpha_0).
\end{align}
We consider nonlinearities which have subcritical or critical growth in the above sense. 
In addition, we introduce the following assumptions.
\begin{align*}
    (A1) \quad &f: \re \to \re\,\,\text{is continuous},\, f(-t) \le f(0) = 0 \le f(t) \,\,\text{for any}\,\,t >0.\\
    (A2) \quad &V \not\equiv 0, V \ge 0 \,\, \text{a.e. in}\,\, B_R^N, \,\text{and there exists} \,\, C_0 >0\,\,\text{such that}\,\, V(|x|) \le C_0 V_p (|x|) \\
    &\text{for any}\,\, x \in B_R^N,\,\,
    \text{where}\,\, V_p (|x|)\,\, \text{is given by Definition \ref{Def V_p}}. \\
    (A3) \quad &\text{There exist} \,\, \la, t_0>0, q>p \,\, \text{such that for any} \,\, |t| \ge t_0,
    F(t) := \int_0^t f(s) \,ds \ge \la |t|^q.\\
    (A4) \quad &\limsup_{t \to 0} \frac{pF(t)}{|t|^p} < \la_V := \inf_{u \in \dot{W}_{0, \rm rad}^{1,p}(B_R^N) \setminus \{0 \} } \frac{\int_{B_R^N} |\nabla u|^p \,dx}{\int_{B_R^N} |u|^p V(|x|) \,dx}.\\
    (A5) \quad &\text{There exist} \,\,\mu >p\,\,\text{and}\,\, t_0 >0 \,\,\text{such that}\,\,\mu F(t) \le f(t)\, t \,\,\text{for any}\,\, |t| \ge t_0.\\
    (A6) \quad &\text{There exist}\,\, t_0>0\,\,\text{and}\,\,M >0\,\,\text{such that} \,\,F(t)  \le M |f(t)| = M |F'(t)|\,\,\text{for any} \,\,|t| \ge t_0.\\
    (A7) \quad &\text{For any}\,\, t \in \re, \,\, p F(t) \le f(t) t.\\
    (A8) \quad &\text{There exists}\,\,C_V >0\,\,\text{such that} \,\, V(|x|) \ge C_V V_p(|x|) \,\, \text{for any}\,\, x \in B_R^N.  \\
    (A9) \quad &\lim_{t \to +\infty} f(t) t e^{-\alpha_0 t^{p'}}  > \frac{p^p}{ \alpha_0^{p-1} C_V L_p}, \,\text{where}\,\,
    L_p := \lim_{n \to \infty} \int_0^1 n e^{n(t^{p'}-t)} \,dt. 
\end{align*}

\begin{remark}\label{Rem A5}
    We can derive (A3) from (A6) and (A1). In fact, by solving the differential inequality for $F$ in (A6), we have $F(t) \ge \la  e^{\frac{|t|}{M}}$ for $|t| \ge t_0$ which implies (A3). 
    Also, we can derive the condition:
    \begin{align*}
        \tilde{(A5)} \quad \text{For any}\,\, \ep >0,\,\text{there exists} \,\, t_{\ep} >0 \,\,\text{such that}\,\, F(t) \le \ep f(t)\, t \,\,\text{for any}\,\, |t| \ge t_{\ep}
    \end{align*}
    from (A6). 
\end{remark}

\begin{remark}\label{Rem la_V}
    The value $\la_V$ in (A4) can be estimated as follows.
    \begin{align*}
        \la_V \ge \begin{cases}
            \( \max_{x \in B_R^N} V(|x|) \)^{-1} \la_p (B_R^N) \quad &{\rm if} \,\, R \in (0, \infty),\\
            \( \max_{x \in B_R^N} |x|^p V(|x|) \)^{-1} \( \frac{N-p}{p} \)^p \quad &{\rm if} \,\, R \in (0, \infty],
        \end{cases}
    \end{align*}
    where 
    \begin{align*}
        \la_p (B_R^N) := \inf_{u \in \dot{W}_{0, \rm rad}^{1,p}(B_R^N) \setminus \{0 \} } \frac{\int_{B_R^N} |\nabla u|^p \,dx}{\int_{B_R^N} |u|^p \,dx},\quad
        \( \frac{N-p}{p} \)^p = \inf_{u \in \dot{W}_{0, \rm rad}^{1,p}(B_R^N) \setminus \{0 \} } \frac{\int_{B_R^N} |\nabla u|^p \,dx}{\int_{B_R^N} \frac{|u|^p}{|x|^p} \,dx}.
    \end{align*}
    If $R=\infty$ and $V(|x|) = V_p (|x|)$, by using Lemma \ref{Lem cpt}, we have
    \begin{align*}
        \int_{\re^N} |u|^p V_p(|x|) \,dx \le  \frac{\Gamma (p)}{p^p} \| \nabla u \|_p^p,
    \end{align*}
    which implies that 
    \begin{align}\label{la_V est}
         \la_{V_p} \ge \frac{p^p}{\Gamma (p)}.
    \end{align}
\end{remark}

\begin{remark}\label{Rem M_p}
    For $L_p$ in (A9), we have the estimate $p \le L_p \le p (p')^{p-1}$ for any $p>1$. In fact, let $t_* = (p')^{-(p-1)}$. Since 
    \begin{align*}
    -t &\le t^{p'} -t \le -\frac{t}{p} \quad \text{for any} \,\, t \in \(0, t_*\),\\
         \frac{t-1}{p-1} &\le t^{p'} -t \le \frac{t-1}{p \( (p')^{p-1} -1 \)} \quad \text{for any} \,\, t \in \(t_* , 1\),
    \end{align*}
    we have
    \begin{align*}
    L_p &\le \lim_{n \to \infty} \int_0^{t_*} n e^{-\frac{nt}{p}} \,dt  + \lim_{n \to \infty} \int_{t_*}^1 n e^{\frac{n(t-1)}{p ( (p')^{p-1} -1 )}} \,dt = p (p')^{p-1},\\
        L_p &\ge \lim_{n \to \infty} \int_0^{t_*} n e^{-nt} \,dt  + \lim_{n \to \infty} \int_{t_*}^1 n e^{\frac{n(t-1)}{p-1}} \,dt = p.
    \end{align*}
\end{remark}


Let $E: \dot{W}_{0, \rm rad}^{1,p} (B_R^N) \to \re$ be the energy functional to (\ref{Elliptic eq}) defined by 
\begin{align}
\label{energy funct}
    E(u) := \frac{1}{p} \int_{B_R^N} |\nabla u |^p \,dx - \int_{B_R^N} F(u) V(|x|) \,dx \quad (\forall u \in \dot{W}_{0, \rm rad}^{1,p} (B_R^N)).
\end{align}
Under (A1) and the assumption on the growth of $f$ (subcritical or critical growth), there exist $\alpha, C_1 >0$ such that 
\begin{align}\label{f upper}
    |f(t)| \le C_1 e^{\alpha |t|^{p'}} \,\,(\forall t \in \re).
\end{align}
Therefore, for any $t \in \re$
\begin{align}\label{F upper}
 0\le F(t) = \int_0^t f(s) \,ds  
    &\le 2 C_1 \int_0^{|t|} e^{\alpha s^{p'}} \,ds \notag \\
    &\le 2 C_1 e^\alpha + 2 C_1 \int_1^{|t|} s^{p'-1} e^{\alpha s^{p'}} \,ds\notag \\
    &= 2C_1 e^\alpha + \frac{2 C_1}{p'\alpha} \( e^{\alpha |t|^{p'}} -e^\alpha  \) 
    \le C_2 e^{\alpha |t|^{p'}}.
\end{align}
By using (\ref{F upper}), (A2) and Theorem \ref{Thm WTM bdd}, we see that 
the functional $E$ is well-defined and is of class $C^1$. 
We show the following results. 

\begin{theorem}\label{Thm subcritical}(Subcritical growth)
    Assume (A1)-(A5) and that $f$ has subcritical growth at $\infty$. Then the equation (\ref{Elliptic eq}) admits a nontrivial radial weak solution.
\end{theorem}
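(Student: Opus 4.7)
The plan is to apply the mountain pass theorem to the $C^1$ functional $E$ from \eqref{energy funct} on the radial space $\dot{W}_{0,{\rm rad}}^{1,p}(B_R^N)$, and then use the compactness supplied by Lemma \ref{Lem cpt} to extract a nontrivial critical point. To force nonnegativity I would first replace $f$ by $\tilde f(t) := f(t)\chi_{\{t \ge 0\}}$; by (A1), any critical point of the modified functional satisfies $u^- = 0$ (test the weak form with $-u^-$) and hence solves the original equation. The radial-to-full-space step is handled exactly as in the proof of Corollary \ref{EL eq}.

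\emph{Mountain pass geometry.} By (A4) there exist $\eta \in (0, \lambda_V)$ and $\delta > 0$ with $pF(t) \le \eta|t|^p$ for $|t| \le \delta$, while \eqref{F upper} combined with the subcritical condition \eqref{subcritical cond} yields, for any prescribed $\alpha > 0$ and $q > p$, a pointwise bound
\[
F(t) \le \tfrac{\eta}{p}|t|^p + C(\alpha, q)|t|^q e^{\alpha|t|^{p'}}, \qquad t \in \re.
\]
Plugging $u$ into this, using (A2), H\"older's inequality, Theorem \ref{Thm WTM bdd} (with $\alpha$ so small that $\alpha \|\nabla u\|_p^{p'} \le \alpha_p$), and the variational characterization of $\lambda_V$ from (A4) together with Lemma \ref{Lem cpt}, I would obtain $E(u) \ge \rho_0 > 0$ on a small sphere $\|\nabla u\|_p = \rho$. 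For the second geometric condition, pick any nonzero radial $u_0 \in C_c^\infty(B_R^N)$ supported in $\{V > 0\}$ (nonempty by (A2)); the superlinear bound (A3) gives $E(tu_0) \to -\infty$ as $t \to \infty$.

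\emph{Bounded Palais-Smale sequence.} The mountain pass theorem produces $\{u_n\}$ with $E(u_n) \to c \ge \rho_0 > 0$ and $E'(u_n) \to 0$ in the dual. The Ambrosetti-Rabinowitz condition (A5), through the standard subtraction $E(u_n) - \tfrac{1}{\mu}\langle E'(u_n), u_n\rangle$ (absorbing the $\{|u_n| \le t_0\}$ contribution via (A1)-(A2)), yields
\[
c + o(1)(1 + \|\nabla u_n\|_p) \ge \Bigl(\tfrac{1}{p} - \tfrac{1}{\mu}\Bigr) \|\nabla u_n\|_p^p - C,
\]
so $\{u_n\}$ is bounded in $\dot{W}_{0,{\rm rad}}^{1,p}$; set $K := \sup_n \|\nabla u_n\|_p$. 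Passing to a subsequence, $u_n \rightharpoonup u$ weakly.

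\emph{Passing to the limit --- the main obstacle.} The delicate point is controlling the exponential nonlinearity along $\{u_n\}$. Choose $\alpha > 0$ with $\alpha K^{p'} < \alpha_p$; by \eqref{subcritical cond} this gives $|f(t)| \le C_\alpha(1 + e^{\alpha |t|^{p'}})$. Selecting $r > 1$ so close to $1$ that $r\alpha K^{p'} \le \alpha_p$, Theorem \ref{Thm WTM bdd} combined with (A2) furnishes a uniform bound $\sup_n \int_{B_R^N} |f(u_n)|^r V(|x|)\,dx < \infty$. Lemma \ref{Lem cpt}, applied with the bounded weight $a := V/V_p \le C_0$, gives $u_n \to u$ strongly in $L^s(B_R^N; V\,dx)$ for every $s \in [1, \infty)$; passing to a further subsequence with $u_n \to u$ a.e., Vitali's convergence theorem then yields $f(u_n) \to f(u)$ and $F(u_n) \to F(u)$ in $L^1(V\,dx)$. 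A Hölder estimate with the same exponents gives $\int V f(u_n)(u_n - u)\,dx \to 0$, which combined with $\langle E'(u_n), u_n - u\rangle \to 0$ produces $\int |\nabla u_n|^{p-2} \nabla u_n \cdot \nabla (u_n - u)\,dx \to 0$; the $(S_+)$-property of $-\Delta_p$ on $\dot{W}_{0,{\rm rad}}^{1,p}$ then upgrades this to strong convergence $u_n \to u$. Consequently $u$ is a weak solution with $E(u) = c > 0$, so $u \not\equiv 0$.
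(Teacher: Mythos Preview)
Your proposal is correct and follows essentially the same route as the paper: mountain pass geometry via (A3)--(A4), boundedness of Palais--Smale sequences via (A5), and then compactness by combining the subcritical growth assumption with Theorem~\ref{Thm WTM bdd} to obtain a uniform $L^r(V\,dx)$ bound on $f(u_n)$ for some $r>1$, after which Lemma~\ref{Lem cpt} and the standard $p$-Laplacian monotonicity inequality yield strong convergence. The paper packages the first two steps as Lemma~\ref{Lem MPG} and the last as Lemma~\ref{Lem PS}~(I), choosing the exponent $a=p'$ explicitly, but the mechanism is the same as yours; your additional truncation $\tilde f = f\chi_{\{t\ge 0\}}$ to force nonnegativity is not needed for the theorem as stated (it only asserts a nontrivial radial weak solution), though it does no harm.
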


\begin{example}\label{Ex subcritical}(Subcritical growth)
    Let $V(|x|)=V_p(|x|)$. The nonlinearities 
    \begin{align*}
       f_1(t) &=k |t|^{\beta -1} t \,\, (k >0, \beta >p-1)\\
       f_2(t) &= k |t|^{\beta -1} t ( e^{\alpha |t|^{\gamma}} -1) \,\,
        \begin{cases}
        &(i) \,k>0, 0<\gamma <p', \beta >p-1-\gamma, \alpha >0 \,\,\text{or}\\
        &(ii) \,k>0, 0< \gamma<p', \beta = p-1-\gamma, \alpha >0 \,\, \text{with} \,\, k\alpha <  \la_V
        \end{cases} 
    \end{align*}
    satisfy the assumptions of Theorem \ref{Thm subcritical}.
\end{example}

\begin{theorem}\label{Thm critical}(Critical growth)
     Assume (A1), (A2), (A4), (A6)-(A9) and that $f$ has critical growth at $\infty$. Then the equation (\ref{Elliptic eq}) admits a nontrivial radial weak solution.
\end{theorem}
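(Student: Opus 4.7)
My plan is to apply the mountain-pass theorem to the $C^1$ functional $E$ defined in (\ref{energy funct}) on $\dot{W}_{0, {\rm rad}}^{1,p} (B_R^N)$. Since (A1) and (A6) already imply (A3) by Remark \ref{Rem A5}, the mountain-pass geometry is verified exactly as in Theorem \ref{Thm subcritical}: combining (A4) with the growth bound (\ref{F upper}), Theorem \ref{Thm WTM bdd} and Lemma \ref{Lem cpt}, one obtains $E(u)\ge \delta>0$ on some small sphere $\{\|\nabla u\|_p=\rho\}$ (choose $\rho$ and $\alpha>\alpha_0$ so that $\alpha\rho^{p'}<\alpha_p$), while (A3) supplies an element $e \in \dot{W}_{0, {\rm rad}}^{1,p} (B_R^N)$ with $E(e)<0$. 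Let $c>0$ denote the associated minimax level.

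Because the critical growth may destroy the Palais-Smale condition at arbitrary positive levels, the decisive step is the strict inequality
\[
c \ <\ c^* \ :=\ \frac{1}{p}\left(\frac{\alpha_p}{\alpha_0}\right)^{p-1}.
\]
I establish it by testing against the Moser sequence $u_k$ from the proof of Theorem \ref{Thm WTM bdd}(II), normalized so that $\|\nabla u_k\|_p=1$, and evaluating $\max_{t\ge 0}E(tu_k)$. Setting $\ell := \lim_{s\to\infty} f(s)\, s\, e^{-\alpha_0 s^{p'}}$, a single integration by parts converts (A9) into the asymptotic lower bound
\[
F(s)\ \ge\ \frac{\ell-\varepsilon}{\alpha_0\, p'\, s^{p'}}\, e^{\alpha_0 s^{p'}}\qquad (s\gg 1).
\]
Combining this with (A8), the explicit evaluation $\int_{B_{r_k}^N} V_p\,dx=(\omega_p/p)\,e^{-pk}$ already performed in the proof of Theorem \ref{Thm WTM bdd}(II), and the Moser change of variables that reduced $T_{p,\alpha_p}^{\rm rad}$ to the one-dimensional problem $M_p$, the quantity $\max_{t\ge 0} E(tu_k)$ is recast as a one-dimensional optimization whose leading asymptotics involves precisely the constant $L_p$. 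The quantitative lower bound in (A9) is then exactly the sharp threshold forcing $\max_{t\ge 0} E(tu_k) < c^*$ for some $k$, hence $c<c^*$.

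Ekeland's principle produces a (PS)$_c$-sequence $\{u_n\}$. Boundedness of $\{\|\nabla u_n\|_p\}$ follows by combining (A7) with the weak Ambrosetti-Rabinowitz-type condition $\tilde{(A5)}$ derived from (A6) in Remark \ref{Rem A5}. Extracting a weakly convergent subsequence $u_n\rightharpoonup u^*$, Lemma \ref{Lem cpt} gives strong convergence in $L^q(B_R^N;V(|x|)\,dx)$ for every $q\in[1,\infty)$, while the strict inequality $c<c^*$ together with Theorem \ref{Thm WTM bdd} supplies a uniform $L^{1+\eta}(V(|x|)\,dx)$-bound on $\{e^{\alpha_0|u_n|^{p'}}\}$ for some $\eta>0$. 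Vitali's theorem then allows me to pass to the limit in $\langle E'(u_n),\varphi\rangle=o(1)$ for every test function $\varphi$, so that $u^*$ weakly solves (\ref{Elliptic eq}); the same uniform-integrability argument shows $u^*\not\equiv 0$, since $u^*\equiv 0$ would force $\int V(|x|) F(u_n)\,dx\to 0$ and hence $E(u_n)\to 0\ne c$. The hardest part is the level estimate $c<c^*$: the precise constant $L_p$ in (A9) is the Carleson-Chang-type concentration limit of the one-dimensional problem $M_p$ (Remark \ref{Rem concentration limit}), and producing a sub-critical estimate along the Moser sequence requires simultaneous careful handling of the core $B_{r_k}^N$ and the outer annulus $B_R^N\setminus B_{r_k}^N$ through the Moser change of variables introduced in the proof of Theorem \ref{Thm WTM bdd}(II).
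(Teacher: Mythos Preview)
Your overall architecture---mountain pass on $E$, level estimate $c<c^*:=\frac{1}{p}(\alpha_p/\alpha_0)^{p-1}$ via the Moser sequence $u_k$, and recovery of compactness below $c^*$---matches the paper's (Lemmas \ref{Lem MPG}, \ref{Lem PS}(II), \ref{Lem d<c}). Your variant of the level estimate, based on the asymptotic $F(s)\gtrsim \frac{\ell-\varepsilon}{\alpha_0 p' s^{p'}}e^{\alpha_0 s^{p'}}$, is a legitimate alternative to the paper's contradiction argument that works directly with $f(t)t$ and the critical-point relation $t_k^{p-1}=\int f(t_ku_k)u_kV\,dx$.

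The genuine gap is in your handling of the (PS)$_c$ condition. The claim that ``$c<c^*$ together with Theorem \ref{Thm WTM bdd} supplies a uniform $L^{1+\eta}(V\,dx)$-bound on $\{e^{\alpha_0|u_n|^{p'}}\}$'' would require $(1+\eta)\alpha_0\|\nabla u_n\|_p^{p'}\le\alpha_p$, i.e.\ $\|\nabla u_n\|_p^p<(\alpha_p/\alpha_0)^{p-1}$ eventually. But one only has $\|\nabla u_n\|_p^p\to p\bigl(c+\int F(u^*)V\,dx\bigr)$, which can exceed $(\alpha_p/\alpha_0)^{p-1}$ whenever $u^*\not\equiv 0$. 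Precisely this case is the technical core of Lemma \ref{Lem PS}(II)-(iii): one must pass to weak-$*$ limits of measures $|\nabla u_n|^p\,dx\overset{*}{\rightharpoonup}d\mu_1$, use (A7) together with (\ref{weak form for smooth funct})--(\ref{T to f}) to prove $\mu_1(B_\delta^N)<(\alpha_p/\alpha_0)^{p-1}$ for small $\delta$, and only then apply Theorem \ref{Thm WTM bdd} \emph{locally} to $u_n\psi_\delta$ on $B_{\delta}^N$ to obtain the $L^a(V\,dx)$-bound (\ref{L^a bdd}) on $f(u_n)$. Two further slips: your non-triviality argument (``$\int F(u_n)V\to 0\Rightarrow E(u_n)\to 0$'') is wrong as stated, since $\int F(u_n)V\to 0$ yields $\|\nabla u_n\|_p^p\to pc>0$, not $E(u_n)\to 0$; the contradiction must come instead from $E'(u_n)[u_n]=o(1)$ and $\int f(u_n)u_nV\to 0$. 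And Vitali alone does not let you pass to the limit in the gradient term $\int|\nabla u_n|^{p-2}\nabla u_n\cdot\nabla\varphi\,dx$, which is nonlinear in $\nabla u_n$; the paper circumvents this by proving \emph{strong} convergence in $\dot W^{1,p}_{0,{\rm rad}}$ from (\ref{L^a bdd}) and the monotonicity inequality for the $p$-Laplacian.
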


\begin{example}\label{Ex critical}(Critical growth)
    Let $V(|x|)= V_p(|x|)$ and 
\begin{align*}
        f_3(t)&=k |t|^{\beta -1} t (e^{\alpha |t|^{p'}} -1)\quad (k>0, \beta \ge p-1, \alpha >0),\\
        f_4(t) &= k |t|^{\beta -1} t e^{\alpha |t|^{p'}}
         \begin{cases}
        &(i) \,k>0, \beta > -1, \alpha >0 \,\,\text{or}\\
        &(ii) \,k>0, \beta = -1, \alpha >0 \,\, \text{with} \,\, k\alpha^{p-1} > \frac{p^p}{L_p}.
        \end{cases} 
\end{align*}
Then the nonlinearities $f_3(t)$ and 
\begin{align*}
      f_5 (t) =  
      \begin{cases}
        0 \quad &(t \le T),\\
        \frac{f_4 \( (1+\delta)T \)}{\delta T}(t -T) &(T < t < (1+\delta)T),\\
        f_4(t) &(t \ge (1+\delta)T)
        \end{cases} 
\end{align*}
    satisfy the assumptions of Theorem \ref{Thm critical}, where $T>0$ is large enough and $\delta=\min \{ 1, \frac{1}{|p-2|}\}$. 
\end{example}


We use the classical mountain pass theorem by Ambrosetti-Rabinowitz to show Theorem \ref{Thm subcritical} and Theorem \ref{Thm critical}. To use it, we have to check mainly two conditions which are the mountain pass geometry (Lemma \ref{Lem MPG}) and the Palais-Smale condition (Lemma \ref{Lem PS}) for the functional $E$. In the subcritical growth case, we can show that $E$ satisfies the Palais-Smale condition at any level $c \in \re$ in a standard way because of the compactness of the associated embedding. On the other hand, the compactness is lost in the critical growth case. However, in this case, we can show that $E$ satisfies the Palais-Smale condition at  level $c$ which is less than some level $\overline{c}$. This level $\overline{c}$ is so-called (the first) non-compactness level of the functional $E$. First, we will find the non-compactness level $\overline{c}$ of the functional $E$. Next, we will show that the mountain pass level $d$ of $E$:
\begin{align}\label{MP level}
    d&:= \inf_{\gamma \in \Gamma} \max_{t \in [0,1]} E(\gamma (t)),\\
    &\text{where}\,\, \Gamma := \{ \gamma \in C ([0,1]; \dot{W}_{0, \rm rad}^{1,p} (B_R^N)) \,\,; \,\, \gamma (0) =0, \, E(\gamma (1)) < 0 \} \notag 
\end{align}
avoids the non-compactness level $\overline{c}$. Namely, we will show $d<\overline{c}$\, in Lemma \ref{Lem d<c}. We need the assumption (A9) to show Lemma \ref{Lem d<c}. 


\begin{lemma}\label{Lem MPG}
    Assume (A1)-(A4) and that $f$ has subcritical or critical growth at $\infty$. Then the functional $E$ satisfies the following moutain pass geometry.
    \begin{align*}
        &{\rm (i)} \,\,E(0)=0. \\
        &{\rm (ii)} \,\,\text{there exist}\,\, a, \rho >0\,\, \text{such that}\,\, E(u) \ge a \,\, \text{for any}\,\, u \in \dot{W}_{0, \rm rad}^{1,p}(B_R^N) \,\, \text{with}\,\, \| \nabla u\|_p =\rho. \\
        &{\rm (iii)} \,\, \text{there exists}\,\,e_0 \in \dot{W}_{0, \rm rad}^{1,p}(B_R^N)\,\, \text{such that}\,\, E(e_0) <0 \,\, \text{and}\,\,  \| \nabla e_0 \|_p >\rho.
    \end{align*}
\end{lemma}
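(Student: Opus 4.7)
I would check the three items separately. Item (i) is immediate from $F(0)=0$, which follows from (A1); the work is in (ii) and (iii), which I would treat by combining Theorem \ref{Thm WTM bdd} with the local behaviour of $F$ supplied by (A4) near zero and (A3) at infinity.

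For (ii), the idea is to split the nonlinear term by the size of $|u|$. Fix $\theta$ with $\limsup_{t\to 0}pF(t)/|t|^p < \theta < \lambda_V$ and $\tau>0$ with $F(t)\le \frac{\theta}{p}|t|^p$ for $|t|\le \tau$. The subcritical or critical growth hypothesis together with (A1) yields $q>p$, $\alpha>0$, and $C>0$ with $0\le F(t)\le C|t|^q e^{\alpha|t|^{p'}}$ for $|t|\ge \tau$. Using (A2) to dominate $V$ by $C_0 V_p$ and the definition of $\lambda_V$,
\[
\int_{B_R^N}F(u)V\,dx \le \tfrac{\theta}{p\lambda_V}\|\nabla u\|_p^p + C\!\int_{B_R^N}|u|^q e^{\alpha|u|^{p'}}V_p\,dx.
\]
I would apply Hölder to the tail with a conjugate pair $(s,s')$, control the polynomial factor by $C\|\nabla u\|_p^q$ via Lemma \ref{Lem cpt}, and rewrite $\alpha s'|u|^{p'}=(\alpha s'\|\nabla u\|_p^{p'})(|u|/\|\nabla u\|_p)^{p'}$ to apply Theorem \ref{Thm WTM bdd}(II) to the exponential factor. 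Restricting to $\|\nabla u\|_p=\rho$ with $\alpha s'\rho^{p'}\le\alpha_p$ gives a uniform bound for that factor, so
\[
E(u)\ge \tfrac{1}{p}(1-\theta/\lambda_V)\rho^p - C\rho^q,
\]
which is bounded below by some $a>0$ once $\rho$ is sufficiently small.

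For (iii), I would use (A3) together with the superlinearity $q>p$. Since $V\not\equiv 0$ is radial by (A2), choose a nontrivial radial $\phi\in C_c^\infty(B_R^N)$ whose support meets $\{V>0\}$. Because $F\ge 0$ by (A1), for $t>0$ large enough the set $\{t|\phi|\ge t_0\}$ carries positive $|\phi|^q V$-mass, say $c>0$, and by (A3),
\[
E(t\phi) \le \tfrac{t^p}{p}\|\nabla\phi\|_p^p - \lambda t^q c \longrightarrow -\infty \quad(t\to\infty),
\]
so $e_0 := t_*\phi$ with $t_*$ large enough satisfies $E(e_0)<0$ and $\|\nabla e_0\|_p>\rho$.

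The main obstacle is the exponential estimate in (ii): $s'$ must be chosen close enough to $1$ and $\rho$ small enough that $\alpha s'\rho^{p'}\le\alpha_p$, which is precisely what activates the weighted Trudinger--Moser bound and keeps the exponential factor uniformly finite. Once that calibration is in place, (ii) reduces to comparing $\rho^p$ with $\rho^q$, and (iii) is the standard superlinearity construction.
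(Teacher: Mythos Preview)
Your argument is correct and follows essentially the same route as the paper: split $F$ according to the size of $|u|$ using (A4) and the growth condition, then control the exponential tail by Theorem~\ref{Thm WTM bdd}(II) after rescaling to make $\alpha s'\rho^{p'}\le\alpha_p$, and handle (iii) by the standard superlinearity test-function argument from (A3). The only cosmetic differences are that the paper takes the H\"older exponent to be $p^*/r$ with $r\in(p,p^*)$ and bounds the polynomial factor via the Sobolev inequality (using that $V_p$ is bounded), whereas you leave $s$ free and appeal to Lemma~\ref{Lem cpt}; both choices work.
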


\begin{proof}{\it ( Lemma \ref{Lem MPG})}

\noindent
(ii) From (A4), there exist $\ep_0 \in (0, 1), t_0 >0$ such that 
\begin{align*}
    F(t) \le \frac{1}{p} \la_V (1- \ep_0)\, |t|^{p} \quad \text{for any}\,\, |t| \le t_0. 
\end{align*}
From (\ref{f upper}), for $r \in (p, p^*)$ there exists $C_1 >0$ such that 
\begin{align*}
    F(t) \le C_1 |t|^r e^{\alpha |t|^{p'}} \quad \text{for any}\,\, |t| \ge t_0. 
\end{align*}
Therefore, we have
\begin{align*}
    E(u) &\ge \frac{1}{p} \int_{B_R^N} |\nabla u|^p \,dx - \frac{\la_V (1-\ep_0)}{p} \int_{B_R^N} |u|^p V(|x|) \,dx - C_1 \int_{B_R^N} |u|^r e^{\alpha |u|^{p'}} \, V(|x|) \,dx\\
    &\ge \frac{\ep_0}{p} \int_{B_R^N} |\nabla u|^p \,dx - C_1 \( \int_{B_R^N} |u|^{p^*} V(|x|) \,dx \)^{\frac{r}{p^*}} \( \int_{B_R^N} e^{\frac{p^* \alpha}{p^* -r} |u|^{p'}} V(|x|) \,dx \)^{1-\frac{r}{p^*}}.
\end{align*}
From Theorem \ref{Thm WTM bdd} and (A2), for any $u \in \dot{W}_{0, \rm rad}^{1,p}(B_R^N)$ with $\| \nabla u \|_p =\sigma$ we have
\begin{align*}
    \int_{B_R^N} e^{\frac{p^* \alpha}{p^* -r} |u|^{p'}} V(|x|) \,dx 
    \le C \int_{B_R^N} e^{\frac{p^* \alpha}{p^* -r} \sigma^{p'} \( \frac{|u|}{\sigma} \)^{p'}} V_p(|x|) \,dx \le C_3,
\end{align*}
where $\sigma >0$ satisfies $\frac{p^* \alpha}{p^* -r} \sigma^{p'} \le \alpha_p$. From the Sobolev inequality, we have
\begin{align*}
    E(u) \ge \frac{\ep_0}{p} \| \nabla u\|_p^p -C_4 \| \nabla u \|_p^r = \frac{\ep_0}{p} \sigma^p -C_4 \sigma^r
\end{align*}
for $\sigma = \| \nabla u \|_p \le \( \alpha_p \frac{p^* -r}{p^* \alpha} \)^{\frac{1}{p'}}$. 
Note that the function $g(\sigma):= \frac{\ep_0}{p} \sigma^p -C_4 \sigma^r$ is increasing on the interval $\(0,  \( \frac{\ep_0}{C_4 r} \)^{\frac{1}{r-p}} \)$. Set $\rho := \min \left\{ \( \alpha_p \frac{p^* -r}{p^* \alpha} \)^{\frac{1}{p'}},  \( \frac{\ep_0}{C_4 r} \)^{\frac{1}{r-p}} \right\}$ and $a:= g(\rho)$. Then we get (ii).

\noindent
(iii) From (A3), we have
$F(t) \ge C_5 |t|^q -C_5$ for any $t \in \re$ and for some $q >p$. From (A2), there exists an open set $U \subset B_R^N$ such that $V(|x|) \ge 0$ for any $x \in U$. 
Let $u_0 \in C_{c, \rm rad}^\infty (U) \setminus \{ 0\}$. Then for $t >0$ we have
\begin{align*}
    E(t u_0) &\le \frac{\| \nabla u_0 \|_p^p}{p} t^p - C_5 \(  \int_{B_R^N} |u_0 |^q V(|x|) \,dx \) t^q  + C_5 \( \int_{B_R^N} V(|x|) \,dx\)\\
    &\to - \infty \quad (t \to + \infty).
\end{align*}
For large $t_0 >0$, set $e_0 = t_0 u_0$. Then we get (iii). 
\end{proof}

\begin{remark}
Lemma \ref{Lem MPG} implies that $d>0$ under the assumptions (A1)-(A4).
\end{remark}


We say that $E$ satisfies the Palais-Smale condition at level $c$, for short, the $(PS)_c$-condition, if for $(PS)_c$-sequence $\{ u_m \}_{m \in \N} \subset \dot{W}_{0, \rm rad}^{1,p}(B_R^N)$, i.e. $E(u_m) \to c$ and $E'(u_m) \to 0$, there exists a strongly convergent subsequence in $\dot{W}_{0, \rm rad}^{1,p}(B_R^N)$. 

\begin{lemma}\label{Lem PS}

\noindent
    {\rm (I)} Assume (A1), (A2), (A5), and that $f$ satisfy subcritical growth at $\infty$. Then $E$ satisfies the $(PS)_c$-condition for any $c \in \re$.
    
\noindent
    {\rm (II)} Assume (A1), (A2), (A6), (A7), and that $f$ satisfy critical growth at $\infty$. Then $E$ satisfies the $(PS)_c$-condition for any $c < \overline{c} := \frac{1}{p} \( \frac{\alpha_p}{\alpha_0} \)^{p-1}$, where $\alpha_0$ and $\alpha_p$ are given by (\ref{critical cond}) and Theorem \ref{Thm WTM bdd} (II), respectively.
\end{lemma}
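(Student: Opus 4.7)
The plan is to follow the classical Palais-Smale analysis for Trudinger-Moser type problems, treating Parts (I) and (II) in parallel. Let $\{u_m\} \subset \dot{W}_{0, \rm rad}^{1,p}(B_R^N)$ satisfy $E(u_m) \to c$ and $E'(u_m) \to 0$. First I would obtain boundedness of $\|\nabla u_m\|_p$ via the Ambrosetti-Rabinowitz-style device $E(u_m) - \theta E'(u_m)u_m$: in (I), the choice $\theta = 1/\mu$ combined with (A5) makes the integrand $\mu^{-1}f(u_m)u_m - F(u_m)$ non-negative for $|u_m|$ large, yielding a bound on $\|\nabla u_m\|_p$; in (II), by Remark \ref{Rem A5} the condition (A6) implies the weaker $\tilde{(A5)}$ (for every $\ep>0$, $F(t) \le \ep f(t)t$ when $|t|$ is large), so the same computation with $\ep < 1/p$ yields boundedness in the critical case. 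Passing to a subsequence, $u_m \rightharpoonup u_\infty$ in $\dot{W}_{0, \rm rad}^{1,p}(B_R^N)$, and by Lemma \ref{Lem cpt} applied with the bounded weight $a(x) := V(|x|)/V_p(|x|)$ (cf.\ (A2)), $u_m \to u_\infty$ strongly in $L^q(B_R^N; V(|x|)\,dx)$ for every $q \in [1,\infty)$ and pointwise a.e.

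The core of the argument is equi-integrability of $f(u_m)V$, $f(u_m)u_m V$ and $F(u_m)V$, which via Vitali's theorem upgrades a.e.\ convergence to $L^1$-convergence. This amounts to a uniform $L^r$-bound (for some $r>1$) on $e^{\alpha|u_m|^{p'}} V(|x|)$. In (I), the subcritical growth (\ref{subcritical cond}) permits $\alpha$ to be taken arbitrarily small; choosing $\alpha$ so that $r\alpha (\sup_m \|\nabla u_m\|_p)^{p'} < \alpha_p$ and invoking Theorem \ref{Thm WTM bdd} (II) gives the required uniform bound. With the nonlinear terms then $L^1$-convergent, one identifies $u_\infty$ as a weak solution by passing to the limit in $E'(u_m) = o(1)$, and testing $E'(u_m)(u_m - u_\infty) = o(1)$ produces
\[
\int_{B_R^N} |\nabla u_m|^{p-2} \nabla u_m \cdot \nabla (u_m - u_\infty)\,dx \to 0.
\]
Combined with the standard monotonicity inequality for $-\lap_p$ and the weak convergence, this upgrades to strong convergence in $\dot{W}_{0, \rm rad}^{1,p}(B_R^N)$, finishing (I).

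The main obstacle is (II): the equi-integrability step now demands the strict inequality
\[
\limsup_{m \to \infty} \|\nabla u_m\|_p^{p} < (\alpha_p/\alpha_0)^{p-1} = p\,\overline{c},
\]
since for critical $f$ one cannot take $\alpha$ below $\alpha_0$. Here the hypothesis $c < \overline{c}$ enters decisively. From $\tfrac{1}{p}\|\nabla u_m\|_p^{p} = c + \int F(u_m) V\,dx + o(1)$ combined with $E'(u_m)u_m = o(1)$ and (A7), I would extract $\int (\tfrac{1}{p} f(u_m) u_m - F(u_m)) V\,dx \to c$, whose integrand is non-negative by (A7), so Fatou's lemma yields $\int (\tfrac{1}{p} f(u_\infty)u_\infty - F(u_\infty)) V\,dx \le c$. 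The plan then is a bootstrap: use the critical growth to prove equi-integrability of the nonlinear terms on compact subsets of $B_R^N \setminus \{0\}$, identify $u_\infty$ as a weak solution, and invoke the strict inequality $c < \overline{c}$ to rule out energy concentration of $u_m$ at the origin---the only mechanism that could push $\limsup \|\nabla u_m\|_p^{p}$ up to $(\alpha_p/\alpha_0)^{p-1}$. Excluding this concentration via the level bound $c < \overline{c}$ is the delicate heart of the proof; once it is done, the required strict upper bound on $\limsup \|\nabla u_m\|_p^{p}$ holds, the $L^r$-bound on $e^{\alpha|u_m|^{p'}}V(|x|)$ is available for some $\alpha>\alpha_0$, and the remainder of (II) proceeds exactly as in (I).
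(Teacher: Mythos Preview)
Your plan for Part (I) matches the paper's and is correct. For Part (II), however, the target you set yourself---the strict global bound $\limsup_m \|\nabla u_m\|_p^p < (\alpha_p/\alpha_0)^{p-1}$---is in general false when $u_\infty \not\equiv 0$. Once one has $F(u_m)\to F(u_\infty)$ in $L^1(V\,dx)$ (the paper obtains this from (A6) and the $L^1$-convergence $f(u_m)\to f(u_\infty)$, their Lemma~\ref{Lem f(u_m)}), the identity $\lim_m\|\nabla u_m\|_p^p = pc + p\int F(u_\infty)V\,dx$ shows the global gradient norm can exceed $p\overline c$ whenever $\int F(u_\infty)V\,dx > \overline c - c$, and nothing in your argument rules this out. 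Concentration at the origin is therefore \emph{not} ``the only mechanism'' pushing the global norm up to the critical level; a nontrivial weak limit does so as well. Your Fatou step $\int(\tfrac1p f(u_\infty)u_\infty - F(u_\infty))V\,dx\le c$ is correct but does not by itself close this gap.

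The paper resolves the difficulty by localizing rather than bounding the global norm. It passes to weak-$*$ limits of measures $|\nabla u_m|^p\,dx \overset{*}{\rightharpoonup} d\mu_1$ and, using (A7) together with $c<\overline c$, proves the \emph{local} estimate $\mu_1(B_\delta^N) < (\alpha_p/\alpha_0)^{p-1}$ for small $\delta$; it then applies Theorem~\ref{Thm WTM bdd} not to $u_m$ but to the truncation $u_m\psi_\delta$ with $\psi_\delta$ a cutoff supported in $B_\delta^N$, yielding the uniform $L^a$-bound on $f(u_m)V$ near the origin. Away from the origin the radial lemma makes $u_m$ uniformly bounded, so the bound there is free. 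This cutoff-plus-local-mass argument is the missing piece in your sketch. A secondary issue: ``identify $u_\infty$ as a weak solution'' is premature for the $p$-Laplacian, since the weak $L^{p'}$-limit of $|\nabla u_m|^{p-2}\nabla u_m$ is some field $T$ not known a priori to equal $|\nabla u_\infty|^{p-2}\nabla u_\infty$; the paper works directly with $T$ (equations (\ref{mu_1 to mu_2}) and (\ref{T to f})) rather than asserting that $u_\infty$ solves the equation.
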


\begin{proof}{\it ( Lemma \ref{Lem PS})} 
Let $\{ u_m \}_{m \in \N} \subset \dot{W}_{0, \rm rad}^{1,p}(B_R^N)$ be a $(PS)_c$-sequence, namely
\begin{align}\label{E to c}
    E(u_m) &= \frac{1}{p} \| \nabla u_m \|^p_p - \int_{B_R^N} F(u_m) V(|x|)\,dx \to c\,\,(m \to \infty),\\
    \label{E' to 0}
    |\,E'(u_m) [\varphi]\,| &= \left| \,\int_{B_R^N} |\nabla u_m|^{p-2} \nabla u_m \cdot \nabla \varphi - f(u_m) \varphi V(|x|) \,dx \,\right| \le \ep_m \| \nabla \varphi \|_p, 
\end{align}
where $\ep_m \to 0$ as $m \to \infty$. Here $E'(u)$ is the Fr\'echet derivative of $E$ at $u$. From (A5), (\ref{E to c}) and (\ref{E' to 0}), we have
\begin{align*}
    \| \nabla u_m \|_p^p &= p E(u_m) + p \int_{B_R^N} F(u_m) V(|x|) \,dx\\
    &\le pc + p \int_{\{ |u_m|\le t_0 \}} F(u_m) V(|x|) \,dx + \frac{p}{\mu} \int_{B_R^N} f(u_m) u_m V(|x|) \,dx+ o(1)\\
    &\le C + \frac{p}{\mu} \,\| \nabla u_m \|_p^p + \frac{p}{\mu} \, \ep_m \| \nabla u_m \|_p + o(1),\ \ \ \text{as}
\ m\rightarrow\infty.
\end{align*}
If $\| \nabla u_m \|_p$ is not bounded, it contradicts the above inequality because $\mu >p$. Therefore, $\| \nabla u_m \|_p \le K$ for some $K>0$ and any $m \in \N$. Then there exists $u_* \in \dot{W}_{0, {\rm rad}}^{1,p}(B_R^N)$ such that
\begin{align*}
    &u_m \rightharpoonup u_* \,\,{\rm in} \,\, \dot{W}_{0, {\rm rad}}^{1,p}(B_R^N), \,\,
    \|\nabla u_* \|_p \le \liminf_{m \to \infty} \|\nabla u_m \|_p \le K,\\
    &u_m \to u_* \,\,\text{a.e. in}\,\,B_R^N, \,\, u_m \to u_* \,\,{\rm in}\,\, L^q(B_R^N) \,\,\text{for any}\,\, q \in (p, p^*).
\end{align*}
Here, we use the compactness of the embedding $\dot{W}_{0, {\rm rad}}^{1,p}(B_R^N) \hookrightarrow L^q(B_R^N)$ by Strauss. 
Again, from (\ref{E' to 0}), we have
\begin{align}\label{f(u_m)u_m}
   \| \nabla u_m \|_p^p &= \int_{B_R^N} f(u_m) u_m V(|x|) \,dx + o(1),\\
    \label{f(u_m)u}
     \int_{B_R^N} |\nabla u_m|^{p-2} \nabla u_m \cdot \nabla u_* \,dx &= \int_{B_R^N} f(u_m) u_* V(|x|) \,dx + o(1).
\end{align}
It is enough to show that there exist $C>0$ and $a >1$ such that for any $m \in \N$  
\begin{align}\label{L^a bdd}
    \int_{B_R^N} |f(u_m)|^a V(|x|) \,dx < C. 
\end{align}
In fact, if (\ref{L^a bdd}) holds, by using (A2) and Lemma \ref{Lem cpt}, we have
\begin{align*}
   &\left| \int_{B_R^N} f(u_m) u_m V(|x|) \,dx - \int_{B_R^N} f(u_m) u_* V(|x|) \,dx \,\right| \\
   &\le C \( \int_{B_R^N} |f(u_m)|^a V(|x|) \,dx \)^{\frac{1}{a}} \( \int_{B_R^N} |u_m - u_*|^{\frac{a}{a-1}} V_p(|x|) \,dx \)^{1-\frac{1}{a}}
   = o(1)
\end{align*}
which implies that $\int_{B_R^N} |\nabla u_m|^{p-2} \nabla u_m \cdot \nabla u_* \,dx = \int_{B_R^N} |\nabla u_m|^{p} \,dx + o(1)$. Therefore,
\begin{align*}
    \lim_{m \to \infty} \int_{B_R^N} \( |\nabla u_m|^{p-2} \nabla u_m - |\nabla u_*|^{p-2} \nabla u_* \) \cdot (\nabla u_m - \nabla u_*) \,dx =0.
\end{align*}
By using the following inequality for $a, b \in \re^N$
\begin{align*}
    \( |b|^{p-2} b - |a|^{p-2} a \) \cdot (b-a) \ge \begin{cases}
        2^{2-p} |b-a|^p \quad &{\rm if}\,\,p \ge 2,\\
        (p-1) |b-a|^2 \( |a|^2 + |b|^2 \)^{\frac{p-2}{2}} &{\rm if}\,\, 1 < p \le 2,
    \end{cases} 
\end{align*}
we have $u_m \to u_*$ in $\dot{W}_{0, {\rm rad}}^{1,p}(B_R^N)$, which means that $E$ satisfies the $(PS)_c$-condition. From now on, we shall show (\ref{L^a bdd}). 

\noindent
(I) Since $f$ has subcritical growth at $\infty$, there exists $t_0 >0$ such that for any $|t| \ge t_0$ 
\begin{align*}
    |f(t)| \le  \exp \( \frac{ \alpha_p}{p' K^{p'}} |t|^{p'} \).
\end{align*}
From Theorem \ref{Thm WTM bdd}, we have
\begin{align*}
     \int_{B_R^N} |f(u_m)|^{p'} V(|x|) \,dx 
     &\le C +   \int_{B_R^N}  \exp \( \frac{\alpha_p}{K^{p'}} |u_m|^{p'} \) V(|x|)\,dx  \\
     &\le C + C \int_{B_R^N} \exp \( \alpha_p \( \frac{|u_m|}{\| \nabla u_m \|_p } \)^{p'} \) V_p (|x|) \,dx < C,
\end{align*}
which implies (\ref{L^a bdd}). Therefore, $E$ satisfies the $(PS)_c$-condition for any $c \in \re$.


\noindent
(II) 
By using $u_m \to u_*$ in $L^q (B_R^N)$ for any $q \in (p, p^*)$ and (\ref{f(u_m)u_m}), we have
\begin{align*}
    \int_{B_R^N} f(u_m) u_m V(|x|) \,dx \le K^{p}+1 \,\, \text{and} \,\, f(u_m), f(u_*) \in L^1 (B_R^N; V(|x|) \,dx).
\end{align*}
Then we get the following. We will show it later.
\begin{lemma}\label{Lem f(u_m)}
$f(u_m) \to f(u_*)$ in $L^1 (B_R^N; V(|x|) \,dx )$. 
\end{lemma}
 
\noindent
Therefore, for any $\varphi \in \dot{W}_{0, {\rm rad}}^{1,p}(B_R^N)$
\begin{align}\label{weak form for smooth funct}
    \lim_{m \to \infty} \int_{B_R^N} |\nabla u_m|^{p-2} \nabla u_m \cdot \nabla \varphi \,dx = \int_{B_R^N} f(u_*) \varphi V(|x|) \,dx.
\end{align}
In fact, from (\ref{E' to 0}), we see that the equality (\ref{weak form for smooth funct}) holds for any $\varphi \in C_{c, \rm rad}^\infty (B_R^N)$. By the density argument, the equality (\ref{weak form for smooth funct}) also holds for any $\varphi \in \dot{W}_{0, {\rm rad}}^{1,p}(B_R^N)$, because, if $\varphi_m \to \varphi$ in $\dot{W}_{0, {\rm rad}}^{1,p}(B_R^N)$, 
\begin{align*}
    \int_{B_R^N} f(u_*) |\varphi_m - \varphi | V(|x|) \,dx 
    &\le \( \int_{B_R^N} |\varphi_m - \varphi |^{p^*} V(|x|) \,dx \)^{\frac{1}{p^*}} \( \int_{B_R^N} f(u_*)^{\frac{p^*}{p^* -1}} V(|x|) \,dx \)^{1-\frac{1}{p^*}} \\
    &\le C \,\| \nabla (\varphi_m - \varphi) \|_p \( C + \int_{B_R^N} e^{ (\alpha_0 +1) \frac{p^* }{p^* -1} |u_*|^p} V_p(|x|) \,dx \)^{1-\frac{1}{p^*}}\\
    &\to 0 \,\, (m \to \infty).
\end{align*}

\noindent
Since 
\begin{align*}
    0 \le  F(u_m (x)) \le M |f(u_m (x))|\,\, \text{for any}\,\, x \in \{ x \in B_R^N \,\,|\,\, |u_m (x)| \ge t_0 \}
\end{align*}
from (A6), 
the generalized Lebesgue dominated convergence theorem (see e.g. Remark in p.20 in \cite{LL}) implies that 
$F(u_m) \to F(u_*)$ in $L^1 (B_R^N; V(|x|) \,dx )$. 
Therefore, from (\ref{E to c}), we see that
\begin{align}\label{u_m lim}
    \lim_{m \to \infty} \| \nabla u_m \|^p_p = p \( c + \int_{B_R^N} F(u_*) V(|x|) \,dx\).
\end{align}

\noindent
We divide three cases.

\noindent
(II)-(i) {\bf The case where $0= \| \nabla u_* \|_p < \lim_{m \to \infty} \| \nabla u_m \|_p$.} We will show (\ref{L^a bdd}). From (\ref{u_m lim}) and (\ref{f(u_m)u_m}), we have
\begin{align*}
   \( 0, \, \( \frac{\alpha_p}{\alpha_0} \)^{p-1} \) \ni pc = \lim_{m \to \infty} \| \nabla u_m \|_p^p = \lim_{m \to \infty} \int_{B_R^N} f(u_m) u_m V(|x|) \,dx.
\end{align*}
Then there exist $a=a(c) >1$ and $\delta_a >0$ such that $a (\alpha_0 + \delta_a) (pc)^{\frac{1}{p-1}} < \alpha_p$. Also, there exists $m_a \in \N$ such that $a (\alpha_0 + \delta_a) \| \nabla u_m \|_p^{p'} \le \alpha_p$ for any $m \ge m_a$. Since $f$ has critical growth at $\infty$, there exists $t_a >0$ such that $|f(t)| \le e^{(\alpha_0 + \delta_a)|t|^{p'}}$ for any $|t| \ge t_a$. Therefore, we have
\begin{align*}
    \int_{B_R^N} |f(u_m)|^a V(|x|) \,dx 
    &\le C + \int_{B_R^N} e^{a(\alpha_0 + \delta_a) |u_m|^{p'}} V(|x|) \,dx\\
    &\le C + C \int_{B_R^N} \exp \( \alpha_p \frac{|u_m|^{p'}}{\| \nabla u_m \|_p^p} \) V_p(|x|) \,dx \le C,
\end{align*}
which implies (\ref{L^a bdd}). Therefore, $E$ satisfies the $(PS)_c$-condition for any $c < \frac{1}{p} \( \frac{\alpha_p}{\alpha_0} \)^{p-1}$.

\noindent
(II)-(ii) {\bf The case where $\| \nabla u_* \|_p = \lim_{m \to \infty} \| \nabla u_m \|_p$.} In this case, we get $u_m \to u_*$ in $\dot{W}_{0, {\rm rad}}^{1,p}(B_R^N)$, which means that $E$ satisfies the $(PS)_c$-condition for any $c < \frac{1}{p} \( \frac{\alpha_p}{\alpha_0} \)^{p-1}$. 

\noindent
(II)-(iii) {\bf The case where $0< \| \nabla u_* \|_p < \lim_{m \to \infty} \| \nabla u_m \|_p$.} 

We will show 
(\ref{L^a bdd}) following the argument in \cite{Panda, SS}. Let $\mathcal{M} (\overline{B_R^N})$ be the space of all Borel regular measures on $\overline{B_R^N}$. By the weak compactness of measures (see e.g. p.55 in \cite{EG}), there exist $\mu_1, \mu_2 \in \mathcal{M}(\overline{B_R^N})$ such that 
\begin{align*}
 |\nabla u_m |^p \,dx \overset{*}{\rightharpoonup}  d\mu_1,\, \,\,f(u_m) u_m V \,dx \overset{*}{\rightharpoonup} d\mu_2 \quad \text{in}\,\, \mathcal{M} (\overline{B_R^N}),\ \ \ \text{as}\ m\rightarrow\infty.
\end{align*}
Here, $\mu_m \overset{*}{\rightharpoonup} \mu$ in $\mathcal{M} (\overline{B_R^N})$ means that for any $f \in C_c(\overline{B_R^N})$, $\int_{B_R^N} f d\mu_m \to \int_{B_R^N} f d\mu$. 
Let $T$ satisfy $|\nabla u_m |^{p-2} \nabla u_m \rightharpoonup T \,\,{\rm in}\,\, \( L^{p'} (\re^N) \)^N$. 
First, we claim that there exists $\delta_0 >0$ such that for any $\delta \in (0, \delta_0 ]$
\begin{align}\label{mu_1 < level}
    \mu_1 \( B_{\delta}^N \) < \( \frac{\alpha_p}{\alpha_0} \)^{p-1}.
\end{align}
We will show it later. 
For $\delta >0$, let $\psi_\delta \in C_{c, \rm rad}^1 (B_{\delta}^N)$ satisfy $0\le \psi_\delta \le 1, \psi_\delta (x) =1$ for $x \in B^N_{\delta /2}$ and $|\nabla \psi_\delta | \le \overline{C} \delta^{-1}$ for a constant $\overline{C}>0$.
Next, we claim that for any $\ep \in \left(0, \frac{1}{3}- \frac{1}{3} \(\frac{\alpha_0}{\alpha_p}\)^{p-1} \mu_1 (B^N_{\delta_0}) \right]$, there exists $\delta_1 \in (0, \delta_0 ]$ such that 
\begin{align}\label{u_m psi}
    \int_{B_{\delta_1}^N} |\nabla (u_m \psi_{\delta_1} ) |^p\,dx \le (1-\ep) \(\frac{\alpha_p}{\alpha_0} \)^{p-1}.
\end{align}
We will show it later. If we set 
\begin{align*}
    v_m := (1-\ep)^{-\frac{1}{p}} \( \frac{\alpha_0}{\alpha_p}\)^{\frac{p-1}{p}} u_m \psi_{\delta_1},
\end{align*}
then we have $\| \nabla v_m \|_{L^p (B_{\delta_1}^N)} \le 1$. By using the weighted TM inequality in Theorem \ref{Thm WTM bdd} for $\{ v_m \}_{m \in \N} \subset \dot{W}_{0, \rm rad}^{1,p} (B^N_{\delta_1})$, we have
\begin{align*}
    \sup_{m \in \N} \int_{B_{\delta_1/2}^N} e^{\alpha_0 (1-\ep)^{-\frac{1}{p-1}} |u_m|^{p'} } V_p (|x|) \,dx
    &\le \sup_{m \in \N} \int_{B_{\delta_1}^N} e^{\alpha_0 (1-\ep)^{-\frac{1}{p-1}} |u_m \psi_{\delta_1}|^{p'}} V_p (|x|) \,dx\\
    &= \sup_{m \in \N} \int_{B_{\delta_1}^N} e^{\alpha_p |v_m |^{p'}} V_p (|x|) \,dx< \infty.
\end{align*}
In the same way as the case (II)-(i), there exist $a = a(\ep)>1$ and $\delta_a, t_a >0$ such that $a (\alpha_0 + \delta_a) \le \alpha_0 (1- \ep)^{-\frac{1}{p-1}}$ and $|f(t)| \le e^{(\alpha_0 + \delta_a ) |t|^{p'}}$ for any $|t| \ge t_a$. 
Finally, we have
\begin{align*}
     \int_{B_R^N} |f(u_m)|^a V (|x|)\,dx 
   &\le C+ C \int_{B^N_{\delta_1/2}} e^{a(\alpha_0 + \delta_a ) |u_m|^{p'}} V_p (|x|) \,dx\\
     &\le C + C \int_{B^N_{\delta_1 /2 }} e^{\alpha_0 (1-\ep)^{-\frac{1}{p-1}} |u_m|^{p'}} V_p (|x|) \,dx \le C.
\end{align*}
Therefore, we get (\ref{L^a bdd}). Therefore, $E$ satisfies the $(PS)_c$-condition for any $c < \frac{1}{p} \( \frac{\alpha_p}{\alpha_0} \)^{p-1}$.
which implies $u_m \to u_*$ in $\dot{W}_{0, {\rm rad}}^{1,p}(B_R^N)$. Therefore, $E$ satisfies the $(PS)_c$-condition for any $c < \frac{1}{p} \( \frac{\alpha_p}{\alpha_0} \)^{p-1}$. 
\end{proof}




\begin{proof}{\it ( Proof of (\ref{mu_1 < level}))} 
For $\delta \in (0,\frac{R}{2})$, let $\varphi_\delta \in C_{\rm rad}^1 (B_R^N)$ satisfy $0\le \varphi_\delta \le 1, \varphi_\delta (x) =1$ for $x \in B_R^N \setminus B^N_{2\delta}$ and $\varphi_\delta (x) =0$ for $x \in B^N_{\delta}$. 
Since $\lim_{m \to \infty} E'(u_m) [u_m \varphi] = \lim_{m \to \infty} E'(u_m) [u_* \varphi] =0$ for any $\varphi \in C_{\rm rad}^1 (B_R^N)$, we have
\begin{align*}
    0 &
    = \lim_{m \to \infty} \left[ \int_{B_R^N} |\nabla u_m |^p \varphi_\delta + |\nabla u_m|^{p-2} (\nabla u_m \cdot \nabla \varphi_\delta ) u_m - f(u_m) u_m \varphi_\delta V(|x|) \,dx \right]\\
    &= \int_{B^N_R} \varphi_\delta\, d\mu_1 + \int_{B_R^N} u_* (T \cdot \nabla \varphi_\delta ) \,dx - \int_{B_R^N} \varphi_\delta \,d\mu_2, \\
    0 &=  \lim_{m \to \infty} \left[ \int_{B_R^N}  |\nabla u_m|^{p-2} \nabla u_m \cdot \nabla (u_* \varphi_\delta )  - f(u_m) u_* \varphi_\delta V(|x|) \,dx \right]\\
    &=  \int_{B^N_R} \varphi_\delta \, (T \cdot \nabla u_*) \,dx + \int_{B_R^N} u_* (T \cdot \nabla \varphi_\delta ) \,dx - \int_{B_R^N} f(u_*) u_* \varphi_\delta V(|x|) \,dx
\end{align*}
which imply that
\begin{align}\label{mu_1 to mu_2}
    \int_{B^N_R} \varphi_\delta \,d\mu_1 - \int_{B_R^N} \varphi_\delta \,d\mu_2 
    = \int_{B_R^N} \varphi_\delta (T \cdot \nabla u_* ) \,dx - \int_{B_R^N} f(u_*) u_* \varphi_\delta V(|x|) \,dx.
\end{align}
Also, by using (\ref{weak form for smooth funct}) with $\varphi=u_*\in \dot{W}^{1,p}_{0,\text{rad}}(B^N_R)$, we have
\begin{align}\label{T to f}
    \int_{B_R^N}  (T \cdot \nabla u_* ) \,dx = \int_{B_R^N} f(u_*) u_* V(|x|) \,dx.
\end{align}
From (\ref{u_m lim}), (\ref{mu_1 to mu_2}), (\ref{T to f}) and (A7), we have
\begin{align*}
    \( \frac{\alpha_p}{\alpha_0} \)^{p-1} &> pc 
    = \mu_1 (B_R^N) - p \int_{B_R^N} F(u_*) V(|x|) \,dx\\
    &\ge \mu_1 (B_{\delta}^N) + \mu_1 (B_R^N \setminus B_\delta^N) - \int_{B_R^N} f(u_*) u_* V(|x|) \,dx\\
    &\ge \mu_1 (B_{\delta}^N) + \int_{B_R^N} \varphi_\delta \,d\mu_1 - \int_{B_R^N} f(u_*) u_* \varphi_\delta V(|x|) \,dx 
    - \int_{B_{2\delta}^N} f(u_*) u_* V(|x|) \,dx\\
    &= \mu_1 (B_{\delta}^N) + R_1(\delta) + R_2 (\delta) + R_3 (\delta), \\
    R_1(\delta)&= \int_{B_R^N} \varphi_\delta \,d\mu_2  - \int_{B_R^N} f(u_*) u_* \varphi_\delta V(|x|) \,dx\\
    &= \lim_{m \to \infty} \int_{B_R^N} f(u_m) u_m \varphi_\delta V(|x|) \,dx - \int_{B_R^N} f(u_*) u_* \varphi_\delta V(|x|) \,dx =0,\\
    R_2 (\delta) &= \int_{B_R^N} \varphi_\delta (T \cdot \nabla u_* ) \,dx  - \int_{B_R^N} f(u_*) u_* \varphi_\delta V(|x|) \,dx \\
    &\to \int_{B_R^N}  (T \cdot \nabla u_* ) \,dx  - \int_{B_R^N} f(u_*) u_*  V(|x|) \,dx= 0\quad (\delta \to 0),\\
    R_3 (\delta) &= - \int_{B_{2\delta}^N} f(u_*) u_* V(|x|) \,dx \to 0 \quad (\delta \to 0),
\end{align*}
where $R_1 (\delta) =0$ comes from the radial lemma: $|u_m (x)|\le C \delta^{-\frac{N-p}{p}}$ a.e. $x \in B_R^N \setminus B_\delta^N$ and the Lebesgue dominated convergence theorem. Therefore, we get (\ref{mu_1 < level}). 
\end{proof}


\begin{proof}{\it ( Proof of (\ref{u_m psi}))} 
Since for any $\delta \in (0, \delta_0]$
\begin{align*}
    \lim_{m\to \infty} \int_{B^N_{\delta}} |\nabla u_m |^p \psi_\delta^p \,dx 
    = \int_{B^N_{\delta}} \psi_\delta^p \,d\mu_1 
    \le \mu_1 (B^N_{\delta}) \le \mu_1 (B^N_{\delta_0}) 
    \le (1-3\ep) \( \frac{\alpha_p}{\alpha_0} \)^{p-1}
\end{align*}
and $|a+b|^p \le |a|^p + p |a+b|^{p-1} |b|$ for any $a, b \in \re^N$ and $p > 1$, we have
\begin{align}\label{u_m psi_delta}
    &\lim_{m \to \infty} \int_{B_\delta^N} |\nabla (u_m \psi_\delta) |^p \,dx \notag  \\
    &\le \lim_{m \to \infty} \left[ \int_{B_\delta^N} |\nabla u_m |^p \psi_\delta^p  \,dx + p \int_{B_\delta^N \setminus B_{\delta /2}^N} |\psi_\delta \nabla u_m + u_m \nabla \psi_\delta |^{p-1} |\nabla \psi_\delta |\, |u_m| \,dx \right] \notag \\
    &\le (1-3\ep) \( \frac{\alpha_p}{\alpha_0} \)^{p-1} + p \max \{ 1, 2^{p-2} \} \lim_{m \to \infty} (A(m, \delta) + B(m , \delta)),
\end{align}
where
\begin{align}\label{A(m, delta)}
    A(m, \delta) &:= \int_{B_\delta^N \setminus B_{\delta /2}^N} \psi_\delta^{p-1} |\nabla u_m|^{p-1} |\nabla \psi_\delta |\, |u_m| \,dx
   \le K^{p-1} B(m, \delta)^{\frac{1}{p}},\\
    \label{B(m, delta)}
    B(m, \delta) &:= \int_{B_\delta^N \setminus B_{\delta /2}^N}  |u_m|^{p} |\nabla \psi_\delta |^p \,dx \le \overline{C}^p |B_1^N |^{1-\frac{p}{p^*}} \( \int_{B_\delta^N \setminus B_{\delta /2}^N}  |u_m|^{p^*} \,dx \)^{\frac{p}{p^*}}.
\end{align}
Note that $p^* := \frac{Np}{N-p}$ is the Sobolev critical exponent and $u_* \in \dot{W}^{1,p}_{0, \rm rad}(B_R^N) \subset L^{p^*}(B_R^N)$ by the Sobolev inequality. Since $|u_m (x)| \le C \delta^{-\frac{N-p}{p}}$ for a.e. $x \in B_{\delta}^N \setminus B^N_{\delta /2}$ and $u_m \to u_*$ a.e. in $B_R^N$, the Lebesgue dominated convergence theorem implies that for fixed $\delta >0$, 
\begin{align}\label{u_m^p^*}
    \lim_{m \to \infty} \int_{B_\delta^N \setminus B_{\delta /2}^N}  |u_m|^{p^*} \,dx = \int_{B_\delta^N \setminus B_{\delta /2}^N}  |u_*|^{p^*} \,dx.
\end{align}
Now we choose $\delta_1 \in (0, \delta_0]$ which satisfies 
\begin{align}\label{A est}
p \max \{ 1, 2^{p-2} \} \, \overline{C}^p |B_1^N|^{1-\frac{p}{p^*}} \(\int_{B_{\delta_1}^N \setminus B_{\delta_1 /2}^N}  |u_*|^{p^*} \,dx \)^{\frac{p}{p^*}} 
&\le \ep \( \frac{\alpha_p}{\alpha_0} \)^{p-1},\\
\label{B est}
p \max \{ 1, 2^{p-2} \} \,\overline{C} \,|B_1^N|^{\frac{1}{p}-\frac{1}{p^*}} \(\int_{B_{\delta_1}^N \setminus B_{\delta_1 /2}^N}  |u_*|^{p^*} \,dx \)^{\frac{1}{p^*}} &\le \ep \( \frac{\alpha_p}{\alpha_0} \)^{p-1}.
\end{align}
By using (\ref{u_m psi_delta}), (\ref{A(m, delta)}), (\ref{B(m, delta)}), (\ref{u_m^p^*}), (\ref{A est}) and (\ref{B est}), we get (\ref{u_m psi}).
\end{proof}


\begin{proof}{\it ( Lemma \ref{Lem f(u_m)})} 
We follow the argument in the proof of \cite[Lemma 2.1]{FMR}. First, let $R< \infty$. Since $L^q (B_R^N) \subset L^1 (B_R^N)$ for $q >1$, we see that $u_m \to u_*$ in $L^1 (B_R^N)$ and $u_m \to u_*$ a.e. in $B_R^N$. For any $\ep >0$, there exist $M >0$ and $m_\ep \in \N$ such that for any $m \ge m_\ep$
\begin{align*}
    I_1 (m, M) &:= \int_{\{ |u_m| > M \}} |f (u_m) - f(u_*) | V(|x|) \,dx < \frac{\ep}{2}, \\
    I_2 (m, M) &:= \int_{\{ |u_m| \le  M \} }  |f (u_m) - f(u_*) | V(|x|) \,dx < \frac{\ep}{2}.
\end{align*}
In fact, from the radial lemma (\ref{Radial Lemma}), we have
\begin{align*}
    |u_m(x) | \le \( \frac{p-1}{N-p} \)^{\frac{p-1}{p}} \w_{N}^{-\frac{1}{p}} K|x|^{-\frac{N-p}{p}}
\end{align*}
which implies that 
\begin{align*}
    \{ |u_m| > M \} \subset B^N_{K(M)}, \,\,\text{where}\,\, K(M):= \left[ \frac{K}{M \w_N^{\frac{1}{p}}} \( \frac{p-1}{N-p}\)^{\frac{p-1}{p}} \right]^{\frac{p}{N-p}} \to 0 \,\, (M \to \infty).
\end{align*}
Therefore, 
\begin{align*}
    I_1 (m, M) 
    &\le \int_{\{ |u_m| > M \}} |f (u_m)| V (|x|) \,dx  + \int_{\{ |u_m| > M \}} |f(u_*)| V(|x|) \,dx\\
    &\le \frac{1}{M} \int_{\{ |u_m| > M \}} f (u_m) u_m V (|x|) \,dx  + \int_{\{ |u_m| > M \}} |f(u_*)| V(|x|) \,dx\\
    &\le \frac{K^p +1}{M} + \int_{B_{K(M)}^N} |f(u_*)| V(|x|) \,dx \to 0\,\,(M \to \infty).
\end{align*}
From the Lebesgue dominated convergence theorem, we have
\begin{align*}
    I_2 (m, M) \to 0 \,\, (m \to \infty) \quad \text{for fixed}\,\,M>0. 
\end{align*}
Therefore, $\lim_{m \to \infty} \int_{B_R^N} |f (u_m) - f(u_*) | V(|x|) \,dx =0$. Next, let $R=\infty$, i.e. $B^N_{\infty} = \re^N$. In the same way as above, we get $\lim_{m \to \infty} \int_{B_1^N} |f (u_m) - f(u_*) | V(|x|) \,dx =0$. For any $x \in \re^N \setminus B_1^N$, we have
\begin{align*}
    |u_m(x)|  \le  \( \frac{p-1}{N-p} \)^{\frac{p-1}{p}} \w_{N}^{-\frac{1}{p}} K =: \overline{K}.
\end{align*}
Therefore, we have
\begin{align*}
   \lim_{m \to \infty} \int_{\re^N} |f(u_m) - f(u_*) | V(|x|) \,dx 
   = \lim_{m \to \infty} \int_{\{ |u_m| \le \overline{K} \} } |f(u_m) - f(u_*) | V(|x|) \,dx =0,
\end{align*}
by using the Lebesgue dominated convergence theorem. 
\end{proof}


\begin{lemma}\label{Lem d<c}
    Let $d$ be given by (\ref{MP level}). Assume (A1), (A2), (A8), (A9), and that $f$ satisfy critical growth at $\infty$. Then $d< \overline{c} = \frac{1}{p} \( \frac{\alpha_p}{\alpha_0} \)^{p-1}$. 
\end{lemma}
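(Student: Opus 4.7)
The strategy is to bound $d$ from above along a path built from the Moser-type concentrating family $u_k$ introduced in the proof of Theorem~\ref{Thm WTM bdd}(II). Recall $\|\nabla u_k\|_p=1$, that $u_k$ equals the constant $\omega_p^{-1/p}k^{1/p'}$ on $B_{r_k}^N$, and that the Moser change of variables $t=\tfrac{p-1}{N-p}p\bigl(\tfrac{\omega_p}{\omega_N}\bigr)^{1/(p-1)}\bigl(|x|^{-(N-p)/(p-1)}-R^{-(N-p)/(p-1)}\bigr)$ turns $V_p\,dx$ into $(\omega_p/p)e^{-t}dt$ and parametrizes $u_k$ as $\alpha_p^{-1/p'}w_n(t)$, where $w_n$ is the one-dimensional Moser function with $n:=pk$. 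Because (A9) forces exponential growth of $F$ at infinity, $E(Tu_k)\to-\infty$ as $T\to\infty$, so the linear path $\gamma_k(t):=tT_ku_k$ lies in $\Gamma$ for $T_k$ large. Hence $d\le\max_{s\ge0}E(su_k)$, and it suffices to prove this maximum is strictly below $\overline c$ for some $k$.

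I argue by contradiction, assuming $\max_{s\ge0}E(su_k)\ge\overline c$ for every $k$, with the maximum attained at $s_k>0$. The identities $E(s_ku_k)\ge\overline c$ and $E'(s_ku_k)[u_k]=0$, combined with $F\ge0$, yield $s_k^p\ge(\alpha_p/\alpha_0)^{p-1}$ and $s_k^{p-1}=\int_{B_R^N}f(s_ku_k)u_kV\,dx$. I then claim $s_k\to s_\infty$ with $s_\infty^p=(\alpha_p/\alpha_0)^{p-1}$. Indeed, plugging the asymptotic $f(t)t\sim Le^{\alpha_0t^{p'}}$ from (A9) into the restriction of the critical-point identity to $B_{r_k}^N$ (where $u_k$ is constant) produces an exponential factor $e^{k\omega_p^{-1/(p-1)}(\alpha_0s_k^{p'}-\alpha_p)}$, which blows up unless $s_k^{p'}\to\alpha_p/\alpha_0$; combined with the lower bound above, this forces $s_k\to s_\infty:=(\alpha_p/\alpha_0)^{1/p'}$.

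The decisive step is a lower bound for the same identity using only the outer contribution on $B_R^N\setminus B_{r_k}^N$. Using (A8), the change of variables above, and setting $a_n:=s_k\omega_p^{-1/p}k^{1/p'}$ together with $\sigma:=t/n$, one obtains
\[
s_k^p\ \ge\ \frac{C_V\omega_p}{p}\,a_n\,n\int_0^1 f(a_n\sigma)\,\sigma\,e^{-n\sigma}\,d\sigma.
\]
Inserting $f(a_n\sigma)\sim Le^{\alpha_0(a_n\sigma)^{p'}}/(a_n\sigma)$ from (A9) and $\alpha_0a_n^{p'}=n(1+o(1))$ (since $s_k^{p'}\to\alpha_p/\alpha_0$) turns the integrand into $(L/a_n)e^{n(\sigma^{p'}-\sigma)}(1+o(1))$, and the definition of $L_p$ gives $n\int_0^1 e^{n(\sigma^{p'}-\sigma)}\,d\sigma\to L_p$. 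Passing to the limit, $s_\infty^p\ge C_V\omega_pLL_p/p$; substituting $s_\infty^p=(\alpha_p/\alpha_0)^{p-1}=p^{p-1}\omega_p/\alpha_0^{p-1}$ gives $L\le p^p/(\alpha_0^{p-1}C_VL_p)$, contradicting (A9).

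The main obstacle is the uniform asymptotic control of the outer integral near $\sigma=1$: because $\alpha_0a_n^{p'}-n=n\alpha_0(s_k^{p'}-\alpha_p/\alpha_0)/\alpha_p$ may fail to be $o(1)$, the Laplace-type limit $n\int e^{n(\sigma^{p'}-\sigma)}\to L_p$ must be justified either by establishing $n(s_k^{p'}-\alpha_p/\alpha_0)\to0$, or, in the opposite regime, by extracting an explosion of the integrand on a neighbourhood of $\sigma=1$ that forces $s_k^p\to\infty$ and directly contradicts the boundedness of $s_k$. A secondary technical point is verifying that the asymptotic of $f$ supplied by (A9), known only as $t\to\infty$, can be used inside the integral; this is handled by splitting into $\sigma\in[\delta,1]$ (where $a_n\sigma\to\infty$ uniformly) and $\sigma\in[0,\delta]$ (whose contribution is controlled by the continuity of $f$ and the smallness of $e^{-n\sigma}$).
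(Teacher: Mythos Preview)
Your overall scheme---contradiction via the Moser family $u_k$, deriving $s_k^{p'}\ge\alpha_p/\alpha_0$ from $E(s_ku_k)\ge\overline c$, showing $s_k^{p'}\to\alpha_p/\alpha_0$ from the inner-ball estimate, and then extracting a lower bound that contradicts (A9)---matches the paper. But the decisive step as you set it up does not close.

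First, what you flag as the ``main obstacle'' is not one: since $s_k^{p'}\ge\alpha_p/\alpha_0$ you have $\alpha_0 a_n^{p'}\ge n$, hence the pointwise bound $e^{\alpha_0 a_n^{p'}\sigma^{p'}-n\sigma}\ge e^{n(\sigma^{p'}-\sigma)}$ on $[0,1]$; no further information on $\alpha_0 a_n^{p'}-n$ is needed for a lower bound. This is exactly how the paper proceeds.

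The genuine gap is in your ``secondary technical point'', combined with the choice to use \emph{only} the outer annulus $B_R^N\setminus B_{r_k}^N$. The asymptotic $f(t)t\ge(\beta-\varepsilon)e^{\alpha_0 t^{p'}}$ from (A9) applies only where $a_n\sigma$ is large, i.e.\ on $[\delta,1]$; on $[0,\delta]$ you can only invoke $f(t)t\ge0$. Your lower bound then reads, in the limit,
\[
s_\infty^p\ \ge\ \frac{C_V\omega_p}{p}(\beta-\varepsilon)\,\lim_{n\to\infty}n\!\int_\delta^1 e^{n(\sigma^{p'}-\sigma)}\,d\sigma.
\]
But the substitution $u=n\sigma$ shows $n\int_0^\delta e^{n(\sigma^{p'}-\sigma)}\,d\sigma\to1$ for every fixed $\delta>0$, hence $n\int_\delta^1\to L_p-1$, and you only obtain $\beta\le p^p/(\alpha_0^{p-1}C_V(L_p-1))$, which does \emph{not} contradict (A9). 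Letting $\delta=\delta_n\to0$ does not help: forcing $a_n\delta_n\to\infty$ (needed for the asymptotic of $f$) implies $n\delta_n\to\infty$, so the loss of the unit persists.

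The paper recovers this missing unit by \emph{retaining the inner ball} $B_{r_k}^N$. It writes the critical-point identity over $A_k=\{t_k u_k\ge T_\varepsilon\}$ (which contains $B_{r_k}^N$), then adds and subtracts over $D_k=B_R^N\setminus A_k$ to obtain $t_k^p\ge I_1-I_2+I_3$ with $I_1=(\beta-\varepsilon)C_V\int_{B_R^N}e^{\alpha_0 t_k^{p'}u_k^{p'}}V_p$, $I_2$ the same integrand on $D_k$, and $I_3=\int_{D_k}f(t_ku_k)t_ku_kV$. One has $\liminf I_1\ge(\beta-\varepsilon)C_V(\omega_p/p)(1+L_p)$ (the extra $1$ coming precisely from $B_{r_k}^N$), while $I_2\to(\beta-\varepsilon)C_V\omega_p/p$ and $I_3\to0$ by dominated convergence. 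The cancellation $1+L_p-1=L_p$ yields the sharp bound $\beta\le p^p/(\alpha_0^{p-1}C_VL_p)$, contradicting (A9). In short: the inner-ball contribution is essential, and your outer-only estimate loses exactly the constant needed.
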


\begin{proof}{\it ( Lemma \ref{Lem d<c})}
Let $u_k \in \dot{W}_{0, \rm rad}^{1,p} (B_R^N)$ be given by the proof of Theorem \ref{Thm WTM bdd} (II). 
Since 
\begin{align*}
    d \le \max \left\{ E(tu_k)\,\,|\,\, t \ge 0 \right\}  \quad ({\forall}k \in \N),
\end{align*}
it is enough to show that there exists $k \in \N$ such that
\begin{align}\label{Claim d<c}
    \max \left\{ E(t u_k)\,\,|\,\, t \ge 0 \right\} = E(t_k u_k) < \frac{1}{p} \( \frac{\alpha_p}{\alpha_0} \)^{p-1}.
\end{align}
We show (\ref{Claim d<c}) by deriving a contradiction. Suppose that for any $k \in \N$
\begin{align*}
    \frac{t_k^p}{p} \| \nabla u_k \|_p^p - \int_{B_R^N} F(t_k u_k) V(|x|) \,dx \ge\frac{1}{p} \( \frac{\alpha_p}{\alpha_0} \)^{p-1}. 
\end{align*}
Since $\| \nabla u_k \|_p =1$ and $F(u), V(|x|) \ge 0$, we see that
\begin{align}\label{t_k lower}
    t_k^{p'} \ge \frac{\alpha_p}{\alpha_0}   \quad ({\forall}k \in \N).
\end{align}
By using (A9), for any $\ep >0$ there exists $T_{\ep} >0$ such that 
\begin{align*}
    f(t) t \ge (\beta -\ep ) e^{\alpha_0 t^{p'}} \quad ({\forall}t \ge T_\ep), 
\end{align*}
where $\beta = \lim_{t \to + \infty} f(t) t e^{-\alpha_0 t^{p'}}$. Since 
\begin{align*}
    0 = \left. \frac{d}{dt} \right|_{t=t_k} E(tu_k) 
    = t_k^{p-1} - \int_{B_R^N} f(t_k u_k) u_k V(|x|) \,dx,
\end{align*}
for large $k \in \N$ we have
\begin{align*}
    t_k^p &\ge C_V \int_{B_{r_k}^N} f(t_k u_k) t_k u_k V_p(|x|) \,dx\\
    &=(\beta -\ep) C_V \exp \( \alpha_0 t_k^{p'} k \w_p^{-\frac{1}{p-1}} \) \( \int_{B_{r_k}^N} V_p (|x|) \,dx\)\\
    &= \frac{(\beta -\ep) C_V \w_p }{p} \exp \left[ \alpha_0 \w_p^{-\frac{1}{p-1}} k  \(t_k^{p'} - \frac{\alpha_p}{\alpha_0}\) \right]
\end{align*}
which implies that $t_k$ is bounded and $\lim_{k \to \infty} t_k^{p'} = \frac{\alpha_p}{\alpha_0}$ by (\ref{t_k lower}). Set
\begin{align*}
    A_k := \{ x \in B_R^N \,\,|\,\, t_k u_k (x) \ge T_\ep \}, \,\,D_k := B_R^N \setminus A_k.
\end{align*}
Note that $A_k = B^N_{R(k)}$, where $R(k) := \( R^{-\frac{N-p}{p-1}} + \frac{(N-p) T_\ep }{(p-1) t_k } k^{\frac{1}{p}} \w_p^{-\frac{1}{p(p-1)}} \w_N^{\frac{1}{p-1}} \)^{-\frac{p-1}{N-p}} \to 0$ as $k \to \infty$.
Then we see that 
\begin{align*}
    t_k^p &= \int_{A_k} f(t_k u_k) t_k u_k V(|x|) \,dx + \int_{D_k}  f(t_k u_k) t_k u_k V(|x|) \,dx\\
    &\ge (\beta -\ep) C_V \int_{A_k} e^{\alpha_0 t_k^{p'} u_k^{p'}} V_p(|x|) \,dx + \int_{D_k}  f(t_k u_k) t_k u_k V(|x|) \,dx\\
    &= (\beta -\ep) C_V \int_{B_R^N} e^{\alpha_0 t_k^{p'} u_k^{p'}} V_p(|x|) \,dx - (\beta -\ep) C_V \int_{D_k} e^{\alpha_0 t_k^{p'} u_k^{p'}} V_p(|x|) \,dx + \int_{D_k}  f(t_k u_k) t_k u_k V(|x|) \,dx\\
    &=: I_1(k) - I_2(k) + I_3(k).
\end{align*}
By using $t_k u_k \to 0$ a.e. in $B_R^N$ and the Lebesgue dominated convergence theorem, we have
\begin{align*}
    I_3(k) &\to 0,\\
    I_2(k) &\to (\beta -\ep) C_V \int_{B_R^N} V_p (|x|) \,dx = (\beta -\ep) C_V \frac{\w_p}{p}.
\end{align*}
For $I_1(k)$, we have
\begin{align*}
    \lim_{k \to \infty} I_1(k) 
    &\ge  (\beta -\ep) C_V \lim_{k \to \infty} \int_{B_R^N} e^{\alpha_p u_k^{p'}} V_p(|x|) \,dx\\
    &= (\beta -\ep) C_V \left[ \frac{\w_p}{p} + \lim_{k \to \infty} \int_{B_R^N \setminus B_{r_k}^N} e^{\alpha_p u_k^{p'}} V_p(|x|) \,dx \right]    
\end{align*}
and by using the change of variables $\( \frac{\w_p}{\w_N} \)^{\frac{1}{p-1}} \( \frac{p-1}{N-p} \) \( r^{-\frac{N-p}{p-1}} - R^{-\frac{N-p}{p-1}} \) = kt$, 
\begin{align*}
    &\int_{B_R^N \setminus B_{r_k}^N} e^{\alpha_p u_k^{p'}} V_p(|x|) \,dx\\
    &= \w_N  \int_{r_k}^R \exp \Bigg\{ pk^{1-p'} \( \frac{p-1}{N-p} \)^{p'} \( \frac{\w_p}{\w_N} \)^{\frac{p'}{p-1}}  \( r^{-\frac{N-p}{p-1}} - R^{-\frac{N-p}{p-1}} \)^{p'}\\
    &\hspace{6em}- p \frac{p-1}{N-p} \( \frac{\w_p}{\w_N} \)^{\frac{1}{p-1}}  \( r^{-\frac{N-p}{p-1}} - R^{-\frac{N-p}{p-1}} \) \Bigg\} \(\frac{\w_p}{\w_N}\)^{p'} r^{(N-1)(1-p')} \,dr \\
    &= \frac{\w_p}{p} pk \int_0^1 e^{pk (t^{p'} -t)} \,dt \to \frac{\w_p L_p}{p} \,\,(k \to \infty).
\end{align*}
Therefore, we get
\begin{align*}
    \frac{\alpha_p}{\alpha_0} = \lim_{k \to \infty} t_k^{p'} 
    \ge \left[ (\beta -\ep) C_V L_p \frac{\w_p}{p} \right]^{\frac{1}{p-1}}. 
\end{align*}
Since $\ep >0$ is arbitrary, we get $\beta \le \frac{p }{ C_V L_p \w_p} \(\frac{\alpha_p}{\alpha_0}\)^{p-1} = \frac{p^p}{ \alpha_0^{p-1} C_V L_p}$ which contradicts (A9). Hence we get (\ref{Claim d<c}). 
\end{proof}


By using above lemmas, we show Theorems.

\begin{proof}{\it ( Theorem \ref{Thm subcritical})}
Theorem \ref{Thm subcritical} follows from the mountain pass theorem, Lemma \ref{Lem MPG} and Lemma \ref{Lem PS} (I). 
\end{proof}

\begin{proof}{\it ( Theorem \ref{Thm critical})}
Theorem \ref{Thm critical} follows from the mountain pass theorem, Remark \ref{Rem A5}, Lemma \ref{Lem MPG}, Lemma \ref{Lem PS} (II) and Lemma \ref{Lem d<c}. 
\end{proof}


\section*{Acknowledgment}
The first author (M.I.) was supported by JSPS KAKENHI Grant-in-Aid for Young Scientists Research, No. JP19K14581 and JST CREST, No. JPMJCR1913. 
The second author (M.S.) was supported by JSPS KAKENHI Early-Career Scientists, No. 23K13001. 
The third author (K.T.) was supported by Grant for Basic Science Research Projects from The Sumitomo Foundation, No.210788.

This work was partly supported by Osaka Central Advanced Mathematical Institute: MEXT Joint Usage/Research Center on Mathematics and Theoretical Physics JPMXP0619217849.


\end{document}